\newtheorem{lemma}{Lemma}
\newcommand{\AL}[1]{{\bf AL:  #1}}
\newcommand{\bkappa}{\boldsymbol{\kappa}}
\newcommand{\R}{\mathbb R}
\newcommand{\Z}{\mathbb Z}
\newcommand{\C}{\mathbb C}
\newcommand{\br}{\mathbf r}
\newcommand{\bR}{\mathbf R}
\newcommand{\bT}{\mathbf T}
\newcommand{\bk}{\mathbf k}
\newcommand{\bh}{\mathbf h}
\newcommand{\bK}{\mathbf K}
\newcommand{\BZ}{\mathcal B}
\newcommand{\be}{\begin{equation}}
\newcommand{\ee}{\end{equation}}
\newcommand{\ba}{\begin{eqnarray}}
\newcommand{\ea}{\end{eqnarray}}
\author{Ivan Duchemin$^{1}$, Luigi Genovese$^{1}$, Eloïse
  Letournel$^{2}$, Antoine Levitt$^{2}$, Simon Ruget$^{1}$}
\title{Efficient extraction of resonant states in systems with defects}
\begin{document}

\maketitle

\begin{abstract}
  We introduce a new numerical method to compute resonances induced by
  localized defects in crystals. This method solves an integral
  equation in the defect region to compute analytic continuations of
  resolvents. Such an approach enables one to express the resonance in
  terms of a ``resonance source'', a function that is strictly localized within the defect region.
  The kernel of the integral equation, to be applied on such a source term, is the Green
  function of the perfect crystal, which we show can be computed efficiently by a complex deformation of the Brillouin
  zone, named Brillouin Complex Deformation (BCD), thereby extending to reciprocal space the concept of complex coordinate transformations.
\end{abstract}
\footnotetext[1]{Univ. Grenoble Alpes, CEA, IRIG-MEM-L Sim, 38054 Grenoble,
France}
\footnotetext[2]{Inria Paris and Universit\'e Paris-Est, CERMICS, Ecole des Ponts ParisTech, Marne-la-Vall\'ee, France}


\section{Introduction}
Hamiltonians of non-homogeneous quantum systems generally have a spectrum consisting
of two parts: discrete bound states, and continuous scattering states.
The bound states are localized in the region around the scatterer, and can therefore be captured
numerically by standard discretization methods in a computational box which is sufficiently large.
Scattering states, on the other hand, are
delocalized, and their efficient numerical approximation requires discretization techniques which are able to 
capture long-range oscillations. These differences
complicate the description of response phenomena that depend sensitively on
continuum states, such as scattering cross sections and resonances. In
this paper, we will consider methodologies to compute such properties
in one-body Hamiltonians of independent particles; nonetheless, the motivation and the expected
applications come from mean-field models such as time-dependent
density functional theory (TDDFT).

As a concrete example, consider a one-body (possibly mean-field)
Hamiltonian $\hat H$, describing for instance a molecule or a defect in a
solid. Many time-dependent response properties can be described by
sums of functions of the type
\begin{align}
  \label{eq:spectral_measures}
  f(E) = \lim_{\eta \to 0^{+}} \langle  \psi_{0}| (E+i\eta-\hat H)^{-1} | \psi_{0}\rangle
\end{align}
where $\psi_{0}$ can be expressed via a localized function. These functions probe the
continuous spectrum at energy $E$: mathematically, we have
${\rm Im}(f(E)) = -\pi \langle \psi_{0}| \hat{d\mu_{H}}(E) |
\psi_{0} \rangle$, where $\hat {d\mu_{H}}$ is the projection-valued spectral
measure associated to $\hat H$. When the Hamiltonian can be
interpreted as a small perturbation of a reference Hamiltonian that has both bound and
continuous states at the same energy $E$ (for instance, a Hamiltonian
where a molecule is surrounded by infinitely high potential barriers,
or where a crystalline defect is disconnected from the host crystal),
the coupling between these states typically results in a bump of
$f$ near $E$. This corresponds to a resonance, which can be formally
defined as a pole in the analytic continuation of $f$ from
the upper complex plane into the lower \cite{dyatlov2019mathematical}.

Resonant states can be found in many areas of Quantum Physics. 
Also referred to as ``Gamow vectors'' or ``Siegert states'',  
they can be defined for isolated systems as solutions of the time-independent Schr\"odinger equation 
subject to outgoing boundary conditions.
Described at first by Gamow \cite{Gamow} via quasi-stationary states, the concept of resonant states has been
widely developed in the field of atomic and nuclear physics
(see e.g.\ Ref.~\cite{PhysRevC.47.768}), then adopted for the analysis of scattering properties of quantum systems with open boundaries~\cite{Hatano2008}.
Various literature has shown that Siegert states \emph{encode}
in compact form the response properties of a system \cite{Myo01051998,Lind1994}.
In particular, the analytic structure of the resolvent operator (i.e. the Green's function) is completely 
determined by resonant energies and wavefunctions.
In the words of Ref.~\cite{PhysRevLett.79.2026}, resonant states expansions offer the ``possibility of a unified
description of bound states, resonances, and continuum spectrum in terms of a purely discrete set of states''.
For one-body Hamiltonians of quantum systems, the identification of resonant energies and wavepackets would unlock
efficient computational aproaches for perturbation theory which would preserve the physico-chemical meaning of the configuration interaction space.

From the perspective of the extraction of physical observables that can be compared to experiments,
the computation of functions of the form \eqref{eq:spectral_measures},
and a fortiori of their analytic continuation to the lower $E$ complex
plane, is challenging. This is because the truncation of $\hat H$ to a finite
region of space will discretize the energy spectrum, in which case the
definition of $f$ above is very singular. This is related to
qualitative differences in wave propagation described by the
full and truncated Hamiltonians. In unbounded domains, waves propagate
to infinity at large times; this results in the correlations $\langle
\psi_{0}| e^{-i\hat H t} | \psi_{0}\rangle$  decaying to
zero in time, and therefore $f$ is a smooth function of energy. In
bounded domains however, standing waves form at discrete energies and
correlation functions do not decay in time. 

Practical computations can
be performed by approximating $f(E)$ by $f(E+i\eta)$ for
some finite $\eta > 0$, which acts as an artificial dissipation
parameter, homogeneous to an inverse time. This makes possible the
computation of $f(E+i\eta)$ by choosing a computational domain
whose size must be large compared to the mean free path of the waves
(proportional to $1/\eta$ times the group velocity of the waves). The resulting scheme requires a delicate
balance between $\eta$ and the size of this computational domain.
Furthermore, all analytic information in the lower-half complex plane
is lost in this procedure, and the computation of resonances requires
a potentially numerically unstable extrapolation from the upper
complex plane to the lower.

More sophisticated numerical schemes have been developed, sometimes
with different names depending on their communities of origin. A first
variation of the dissipation method, known as complex absorbing potential \cite{MUGA2004357}, is
to replace $\hat H$ by $\hat H + \eta \hat V_{\rm CAP}(x)$, where $\hat V_{\rm CAP}(x)$ is
non-zero only outside of a central region. This has similar properties
to the above-mentioned technique of a uniform $\eta$, but does not
modify the operator in the defect region, and can therefore be
preferable in practice.

A second technique is to exploit the analytic continuation of the
solution to complex numbers to transform scattering or resonant states
into localized states \cite{complex_scaling_3D}. This is done by
replacing the space variable $x$ by $x e^{i\theta}$. In this complex
scaling approach, the continuous spectrum of the non self-adjoint
operator $\hat H_{\theta}$ is moved from $\R^{+}$ to $e^{-2i\theta} \R^{+}$,
allowing for the computation of functions $f$ as above as well as
direct computation of resonances. This can either be done on the whole
space, or only on the exterior of a central region; the resulting
scheme is then called exterior complex scaling, or perfectly matched
layers.

Finally, a third class of methods is to eliminate the degrees of
freedom outside of a computational domain. At the discrete level, this
is done by a Schur complement approach, while at the continuous level
this corresponds to Dirichlet-to-Neumann (DTN) maps
\cite{givoli2013numerical}. This is especially attractive in simple
geometries (such as one-dimensional or three-dimensional in radial
coordinates), where the DTN corresponds to a simple local mixed boundary
condition.

None of these schemes are fully satisfactory, in particular for the study of defects in quantum systems. The ``black-box''
approach of adding an artificial dissipation  requires a complicated convergence
study with respect to the dissipation parameter $\eta$ (the so-called
$\eta$ trajectories) and may necessitate very large computational domains to
obtain stable results. 
Both the complex scaling as well as the DTN approaches require a
particular form for the operator outside of the computational domain,
with the DTN method even requiring the analytic solution of the
equation. This is particularly problematic in the case where $\hat H$
is not fully homogeneous in space outside of a central region, but
only periodic, as is the case for defects in solids. Since the
exterior problem can be solved in something resembling a closed form
only on half-spaces, one has to resort
to a matching procedure that makes it impossible to compute resonances
directly \cite{bonnetbendhia:hal-01793511} in dimensions greater than one. In particular, the
computation of the DTN map for a periodic operator on the outside of
an arbitrary rectangular domain seems to be a computationally intractable
problem.

In this paper, we introduce an integral equation formalism which
bypasses such difficulties by
an \textit{algebraic} rather than \textit{geometric} splitting:
instead of considering the interior and the exterior problem
separately, we rather split the Schrödinger operator as
$\hat H = \hat H_{0} + \hat V$, where $\hat V$ is localized only in the central region.
This reformulates the problem as an integral equation posed in the
central region, similar to the Lippman-Schwinger method used in
scattering problems.

The kernel of this integral equation is given by the Green function
$R_0(\cdot,\cdot;z)$ of the Hamiltonian $\hat H_{0}$ of the periodic
crystal, possibly extended the lower complex plane of energies $z$.
The Green function can be expressed as an integral over the Brillouin
zone, which we deform using a multi-dimensional generalization of the
Cauchy integral formula. To our knowledge, this is the first numerical
method able to do so in the multidimensional case. We choose the
deformation function $\bk\mapsto\bk + i\bh(\bk)$ so that the
singularities of $R_{0}$ get pushed down into the lower complex
plane for $z$, extending the domain of validity of the integral
formula to the continuation of $R_{0}$. These complex variable techniques have long been used in theoretical studies
\cite{gerard,Hoang2011TheLA,joly:hal-00977852}. A similar
approach has recently been used as a numerical method in 1D scattering problems
\cite{zhang2021numerical}. We demonstrate in the Appendix A that it
can be interpreted as a natural generalization of the complex scaling method
to non-parabolic dispersion relations. The resulting scheme only requires unit
cell computations, and proves very efficient in practice.

The outline of the paper is as follows. First, we introduce the
reformulation to an integral equation in a general setting in
Section~\ref{sec:method}. Then we apply it to the model case of a
local perturbation of the free Laplacian, and compare it to
an established method (complex scaling) in Section~\ref{sec:free}. We
introduce the Brillouin zone Complex Deformation (BCD) algorithm in
Section~\ref{sec:periodic_media}, and apply it to example problems
(one-dimensional chain and two-dimensional graphene) in
Section~\ref{sec:apps}. In the Appendix A, we reinterpret the BCD method
as a generalization of the complex scaling method.



\section{Method}
\label{sec:method}
\subsection{Statement of the problem}
We consider the abstract setting of a Hamiltonian
\begin{align*}
 \hat H = \hat H_{0} + \hat V
\end{align*}
where the underlying Hilbert space can be discrete or continuous. We
assume that $\hat H_{0}$ has a (continuous or discrete) translation
invariance (which makes the wavenumber a good quantum number for the eigenstates of $\hat H_0$), and that $\hat V$ is localized in a region of space. In
particular, we will consider three typical cases:
\begin{enumerate}
\item Molecular hamiltonians, where $\hat H_{0} = \hat T$ represents the kinetic operator, and is self-adjoint
  on $L^{2}(\R^{d})$, and $\hat V$ is a (local or nonlocal) potential
  modeling the Coulomb attraction by nuclei, plus possibly Hartree or
  exchange-correlation terms at the mean-field level.
\item Crystal hamiltonians, where $\hat H_{0} = \hat T + \hat V_{\rm per}+V$ where
  $\hat V_{\rm per}$ is periodic in space, representing the background
  potential created by a perfect crystal, and $\hat V$ is the potential
  created by a crystallographic point defect.
\item Tight-binding models, which can be thought of as simplifications
  of the above crystalline model (e.g. through expansion of the orbitals in
  a basis of Wannier functions). Here the state space is discrete,
  with $M$ degrees of freedom at each point of a discrete lattice: the
  underlying Hilbert space is isomorphic to $\ell^{2}(\Z^{d}, \mathbb{C}^{M})$. $\hat H_{0}$ is periodic in the sense that for each $\bR, \bR' \in
  \Z^{d}$, the $M \times M$ matrix $H_{0}(\bR, \bR')$ satisfies the
  property $H_{0}(\bR, \bR') = H_{0}(\bR+\bT, \bR'+\bT)$ for all
  $\bT \in \Z^{d}$. Here also the (local or nonlocal) potential
  models a point defect.
\end{enumerate}
By suitable modifications of the definitions of $\hat H_{0}$ and $\hat V$, the
above setup can be easily modified to accomodate cases that do not fit
exactly (see for instance Section~\ref{sec:graphene} for the example
of an adatom in graphene). We assume that $\hat H$ is real (although the
extension to complex Hamiltonians poses no difficulty).

\medskip

Since $\hat H$ is self-adjoint, its resolvent
\begin{align*}
  \hat R(z) = (z-\hat H)^{-1}
\end{align*}
is well-defined as a bounded operator when ${\rm Im}(z) > 0$. Its
kernel $R(\br,\br';z)$ is localized, in the sense that
$R(\br,\br',z) \to 0$ when $|\br-\br'| \to \infty$. This well-known
fact is referred to as Combes-Thomas estimates in the mathematical
physics literature \cite{combes_asymptotic_1973}. In fact, this decay
is exponential, with a decay rate related to the imaginary part of $z$.

As $z$ approaches the
spectrum of $\hat H$, the resolvent diverges; in fact,
\begin{align*}
  \|\hat R(z)\| = \sup_{\|\ket \psi\| = 1} \|\hat R(z)\ket \psi\| = \frac 1 {{\rm dist}(z,\sigma(\hat H))}
\end{align*}
where $\sigma(\hat H)$ is the spectrum of $\hat H$ \cite{reed1979i}. This is linked to
delocalization of the kernel of $\hat R(z)$. However, the kernel of $\hat R(z)$
can sometimes be continued analytically beyond the real axis \cite{dyatlov2019mathematical,gerard}; it is
then typically exponentially growing. Accordingly, if $\ket \psi$ is an
arbitrary localized test vector, then the function
\begin{align*}
  \bra \psi \hat R(z) \ket \psi ,
\end{align*}
defined as a holomorphic function on the upper complex plane, can
sometimes be extended analytically through the essential spectrum into the lower complex plane. This
analytic continuation can have poles, which are typically insensitive
to the choice of $\ket \psi$. We call such a pole $z_0$ a resonance energy,
and the projector on the resonant states can be identified from the
residue of $\hat R$ around $z_0$. The resonant states are delocalized solutions of
the Schrödinger equation with a complex energy; the imaginary part of $z_{0}$
is inversely proportional to their lifetime.

\subsection{Computation of resonances}
\label{sec:computation_of_resonances}
To compute resonances, we first use the following identity among operators:
\begin{align}
  \label{eq:resolvent_identity}
  \hat R(z) = \hat R_{0}(z) (\hat 1-\hat V \hat R_{0}(z))^{-1}
\end{align}
for $z$ in $\mathbb{C}^+$, where $\hat R(z) = (z-\hat H)^{-1}$ and $\hat R_{0}(z) = (z-\hat H_{0})^{-1}$. This identity
is variously known as a resolvent identity, the Dyson equation, or the
Duhamel formula, and can be seen as an operator version of the
Lippmann-Schwinger equation. 
If $\hat R_{0}(z)$ can be extended analytically to the lower complex plane of $z$,
then resonances can be found by solving the equation
\begin{align}
  \label{eq:phi}
  \boxed{\ket{\varphi(z)} = \hat V \hat R_{0}(z) \ket{\varphi(z)}}
\end{align}
for $z$ and $\ket {\varphi(z)}$. The main interest of this formulation is that, by construction,
the ``resonance source'' term $\ket {\varphi(z)}$ is localized in the defect region where the support of $\hat V$
lies, which is not the case of the resonant vector $\ket{\psi(z)}=\hat R_{0}(z) \ket{\varphi(z)}$, a delocalized
eigenvector of $\hat H = \hat H_{0} + \hat V$ (the resonant state).

Consider now a simple resonance at $z_{0}$ (such that the
dimension of the kernel of $\hat 1 - \hat V \hat R_{0}(z_{0})$
is $1$). As $\ket {\varphi(z_{0})}$ is not directly associated to a physical state,
its normalization is not well defined. However, it
is useful to normalize it such that the resolvent has the asymptotic
form
\begin{align}
  \label{eq:res_expansion}
  \hat R(z) \approx \frac 1 {z-z_{0}} |\psi(z_0)\rangle\langle \overline{\psi(z_0)}|
\end{align}
close to $z_{0}$. 


From calculations detailed in Appendix B, this holds as long as 
\begin{align}
  \langle \overline{\psi(z_0)}| \hat V \hat R_{0}'(z_{0})| \varphi(z_0) \rangle = -1
\end{align}
This condition fixes the magnitude and phase of $\ket \varphi$ and
$\ket \psi$ up to sign.


To find resonances, we need to (a) discretize $\ket \varphi$; (b)
compute the action of the analytic continuation of $\hat R_{0}(z)$;
and (c) solve the equation \eqref{eq:phi}. The first task (a) is
standard since $\ket \varphi$ is localized, and can be done using
any method used to compute ground-state properties. Task (c) takes the
form of a nonlinear eigenvalue problem $A(z) x = 0$. For small
systems, one can simply solve for ${\rm det}(A(z)) = 0$. For larger
systems where determinants might be ill-conditioned or hard to
compute, one can use nonlinear eigensolvers (for instance, applying
Newton's method to the set of equations $A(z) x = 0, \|x\|=1$). 
Solving nonlinear eigenproblems is a well-studied topic with a variety of efficient algorithms \cite{guttel_tisseur_2017}. The
task we will focus on is therefore (b), the computation of the analytic
continuation of $\hat R_{0}(z)$.

\section{Free Laplacian}
\label{sec:free}
\subsection{Theory}
Before tackling periodic problems, we consider a
simple one-dimensional model where $\hat R_{0}$ is explicit. We
emphasize that this case can be
treated using other methods than the one presented here (see for
instance \cite{complex_scaling_3D}), and is
simply presented as a test bed for understanding the methodology.
The Hamiltonian is the following:
\begin{align*}
    \hat H=\hat T+\hat V,
\end{align*}
and we assume that $\hat V$ is a localized potential.

Let us first study the situation where the unperturbed Hamiltonian is
$ \hat H_0 = -\Delta $, with resolvent kernel $R_{0}$. For
${\rm Im}(z) > 0$, consider the equation
\begin{align*}
  (z+\Delta) S(\br) = \delta_{0}(\br)
\end{align*}
Together with the boundary condition that $S$ must go to zero at
infinity, this equation admits a single solution for $z$ in the upper complex plane:
\begin{align*}
  S_1(\br) = \frac{e^{i \sqrt{z} |\br|}}{2i \sqrt{z}},
\end{align*}
where we choose the convention
${\rm arg}(\sqrt z) \in (-\tfrac \pi2,\tfrac \pi 2)$. For $z$ in the
lower complex plane, the only localized solution would be 
\begin{align*}
     S_2(\br) = -\frac{e^{-i \sqrt{z} |\br|}}{2i \sqrt{z}}
\end{align*}
Note that the definition of the Green function differs between the
upper and lower complex planes. When $z$ is continued from the upper
to the lower complex plane through the negative real axis, $S_{1}$ and
$S_{2}$ match; however, when the continuation is performed through the
positive real axis (the spectrum of $\hat H_{0}$), they differ. This gives $S$ the structure of a
multivalued function, a Riemann surface of the same type as the square
root function, with a singularity at $0$.


From the above considerations, for ${\rm Im}(z) > 0$ we have
\begin{align*}
  R_{0}(\br, \br'; z ) = \frac{e^{i \sqrt{z} |\br-\br'|}}{2i \sqrt{z}}
\end{align*}
with an artificial branchcut on $\mathbb{R}^{-}$ and a singularity
(branchpoint) at 0.  This formula can be continued across the positive
real axis (where the kernel explodes as a function of $|\br-\br'|$). When $\hat V$ is compactly supported, using the identity
\eqref{eq:resolvent_identity} together with the analytic Fredholm
theory, one can show (Theorem 2.2 of \cite{dyatlov2019mathematical})
that $R$ extends to a meromorphic function (holomorphic except for a
countable set of points at which $R$ has finite-order poles, the
resonances) across the real axis. Using the method in
Section~\ref{sec:computation_of_resonances}, the equation
\eqref{eq:phi} for $\ket \varphi$ becomes a Fredholm integral equation
\begin{align}
  \label{eq:int_eq}
  \varphi(\br) = \frac{1}{2i \sqrt{z}} V(\br) \int_{\R} e^{i \sqrt{z} |\br-\br'|} \varphi(\br') d\br' 
\end{align}
for $z$ in the lower complex plane. As a shorthand we have omitted the explicit dependence of $\varphi$ on $z$.


\subsection{Application}
We
illustrate on the example of a double-well potential $ V(x)=
2(e^{-(\frac{x}{2})^2}- e^{-x^2})$ as shown in Figure~\ref{fig:potential}, for which we expect resonances
localized in the middle of the well.
\begin{figure}[h!]
    \centering
    \includegraphics[width=0.4\textwidth]{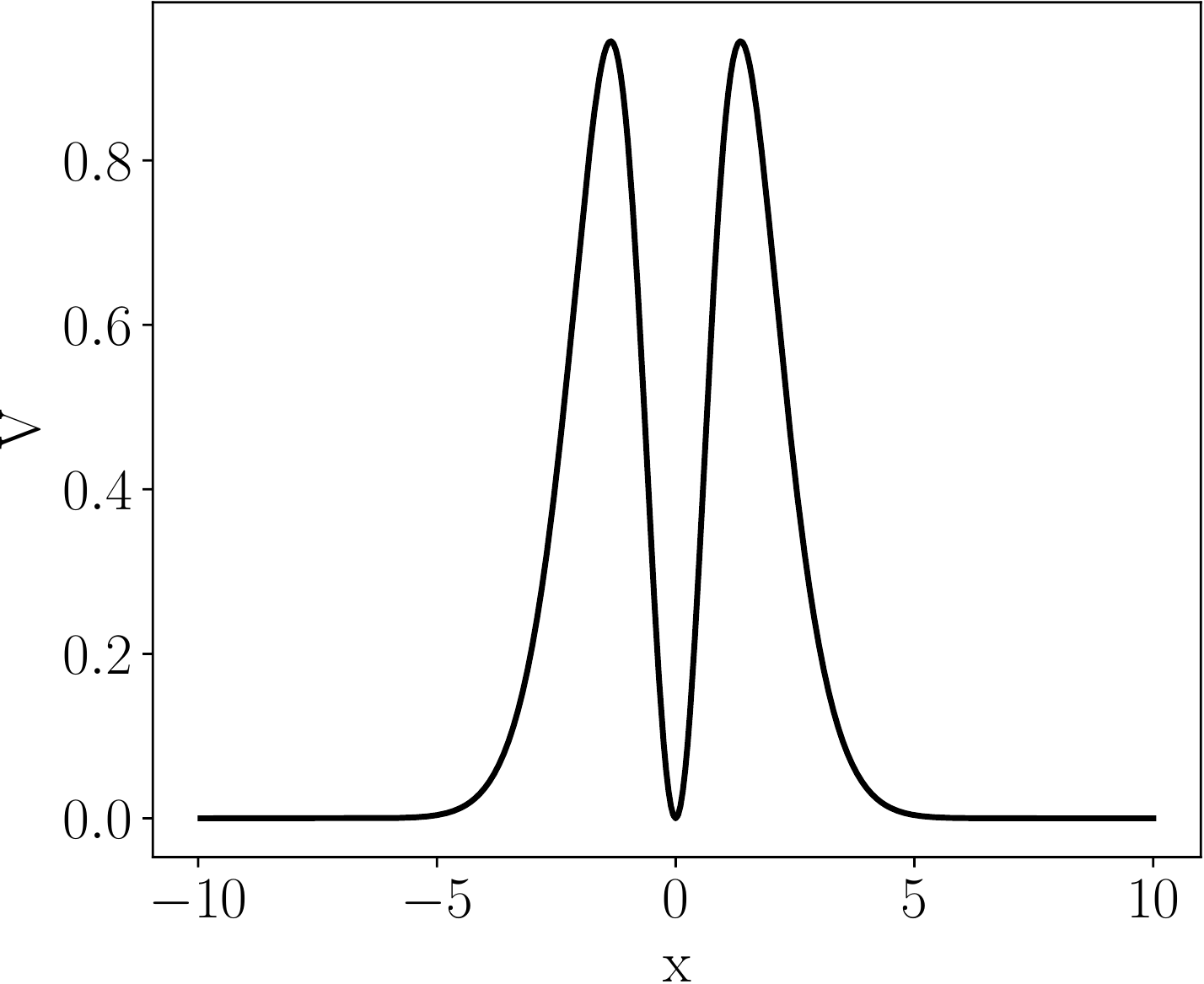}
    \caption{Potential $V(x)$.}
    \label{fig:potential}
\end{figure}

To acknowledge the capability of our method to capture physically relevant states, we compare our results to the uniform complex scaling method, which rotates
the Hamiltonian in the complex plane and looks for eigenvalues of the
non-hermitian operator,
\begin{align*}
    \hat H_{\theta}=& \hat S_{\theta}\hat H \hat S_{\theta}^{-1}  = -e^{2i\theta}\Delta + V(x e^{i\theta})
\end{align*}
The complex scaling operator $\hat S_{\theta}$ is given by the real space representation
\begin{align*}
  (S_{\theta}f) (x)& =f(xe^{i\theta})
\end{align*}
This transformation leaves the discrete spectrum and the resonances of
$\hat H$ invariant, but rotates the continuous spectrum of $\hat H$ by
an angle $-2\theta$. Eigenvalues in the lower complex plane which used
to be on the lower Riemann surface of the Green function can thus
appear as eigenvalues of the rotated Hamiltonian. For our comparison
we discretize $\hat H_{\theta}$ using simple finite differences with
step $h$ in a domain $[-\tfrac L2, \tfrac L2]$, and use $\theta=\pi/5$
throughout. For our method, we solve the integral equation
\eqref{eq:int_eq} using finite differences on the same grid.

\begin{figure}[h!]
\centering
\begin{subfigure}[t]{0.45\textwidth}
\centering
    \includegraphics[width=\textwidth]{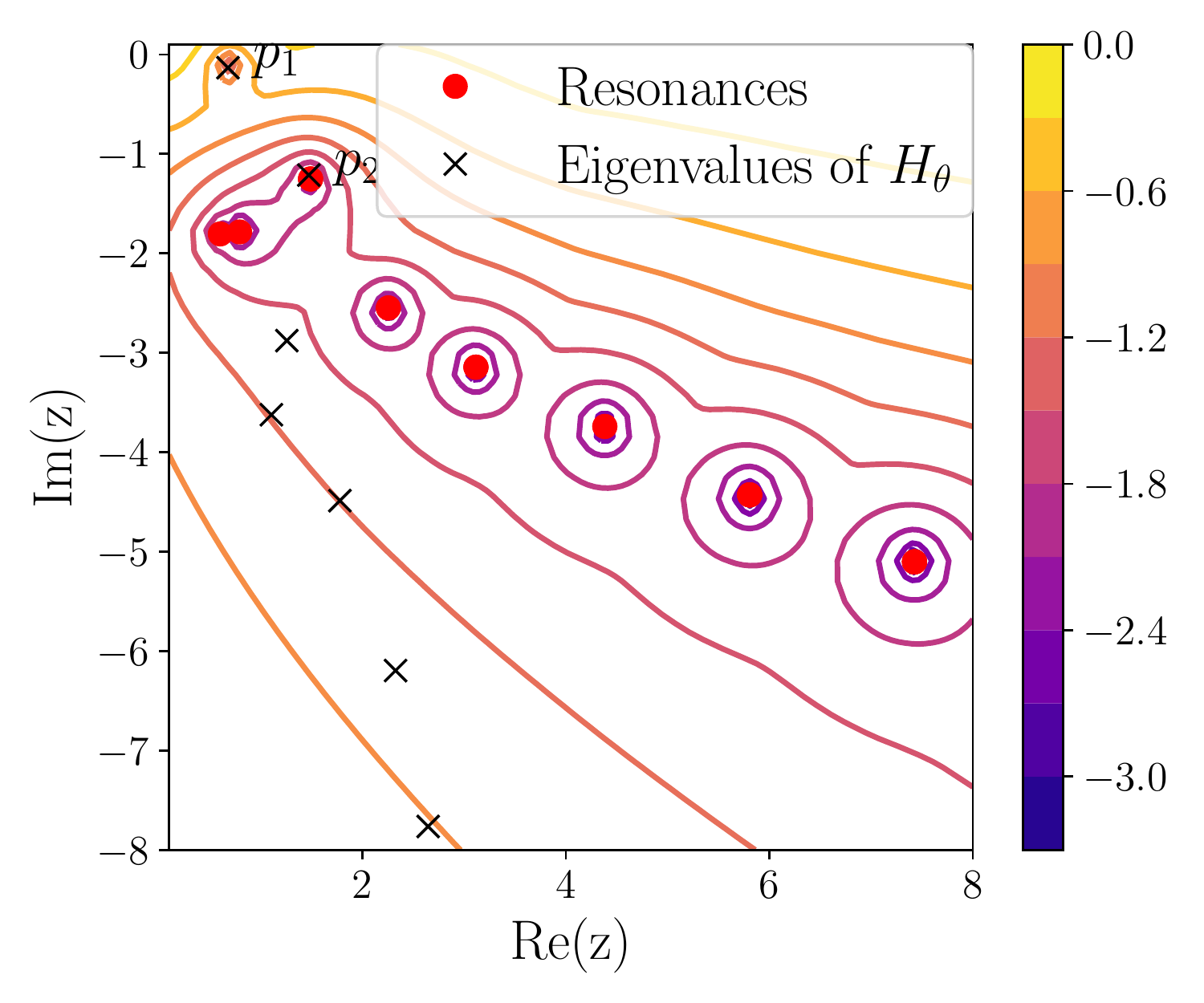}
    \caption{Resonances for $L=10$, $h=0.05$}
    \end{subfigure}
    \begin{subfigure}[t]{0.45\textwidth}
    \centering
    \includegraphics[width=\textwidth]{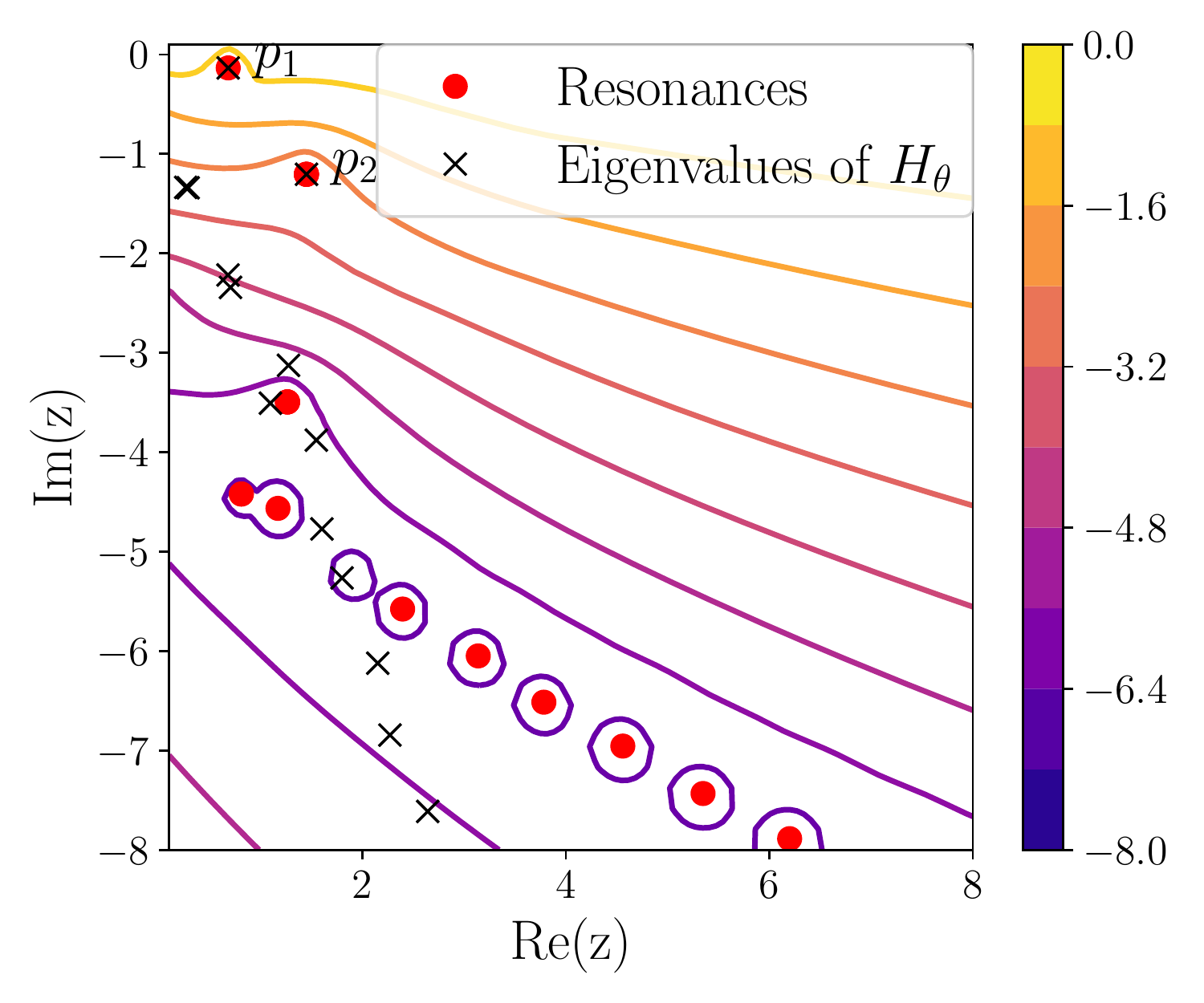}
    \caption{Resonances for $L=20$, $h=0.05$. }
    \end{subfigure}
\caption{ Base 10 logarithm of the smallest  singular value of $1 -
  \hat V \hat R_{0}(z)$, and eigenvalues of the complex scaled $\hat
  H_{\theta}$ with $\theta=\pi/5$. Resonances as well as spurious poles are visible for both methods. The spurious poles do not converge with $L$, whereas the resonances do.}
\label{fig:green_and_CS}
\end{figure}

We display our results Figure~\ref{fig:green_and_CS}. For our method,
we plot the smallest singular value of the discretization of
$\hat 1 - \hat V \hat R_{0}(z)$, which is zero at resonances.
In Figure~\ref{fig:green_and_CS}, two types of poles and
eigenvalues can be seen. For the two closest to the real axis, both
methods give the same location, and this location is stable as the box
size $L$ is increased. Lower poles are spurious. Those given by our method narrow and plummet more steeply when $L$ goes to infinity. With complex scaling, the angle of the line is stable with $L$, but the poles tighten up as well. The well-known complex virial theorem~\cite{complex_scaling_3D} can be employed to distinguish spurious poles from actual resonances without recurring to multiple calculations with varying $L$.

\begin{figure}[h!]
  \captionsetup[subfigure]{justification=centering}
\centering
 \begin{subfigure}[t]{\textwidth}
        \centering
        \includegraphics[width=0.49\textwidth]{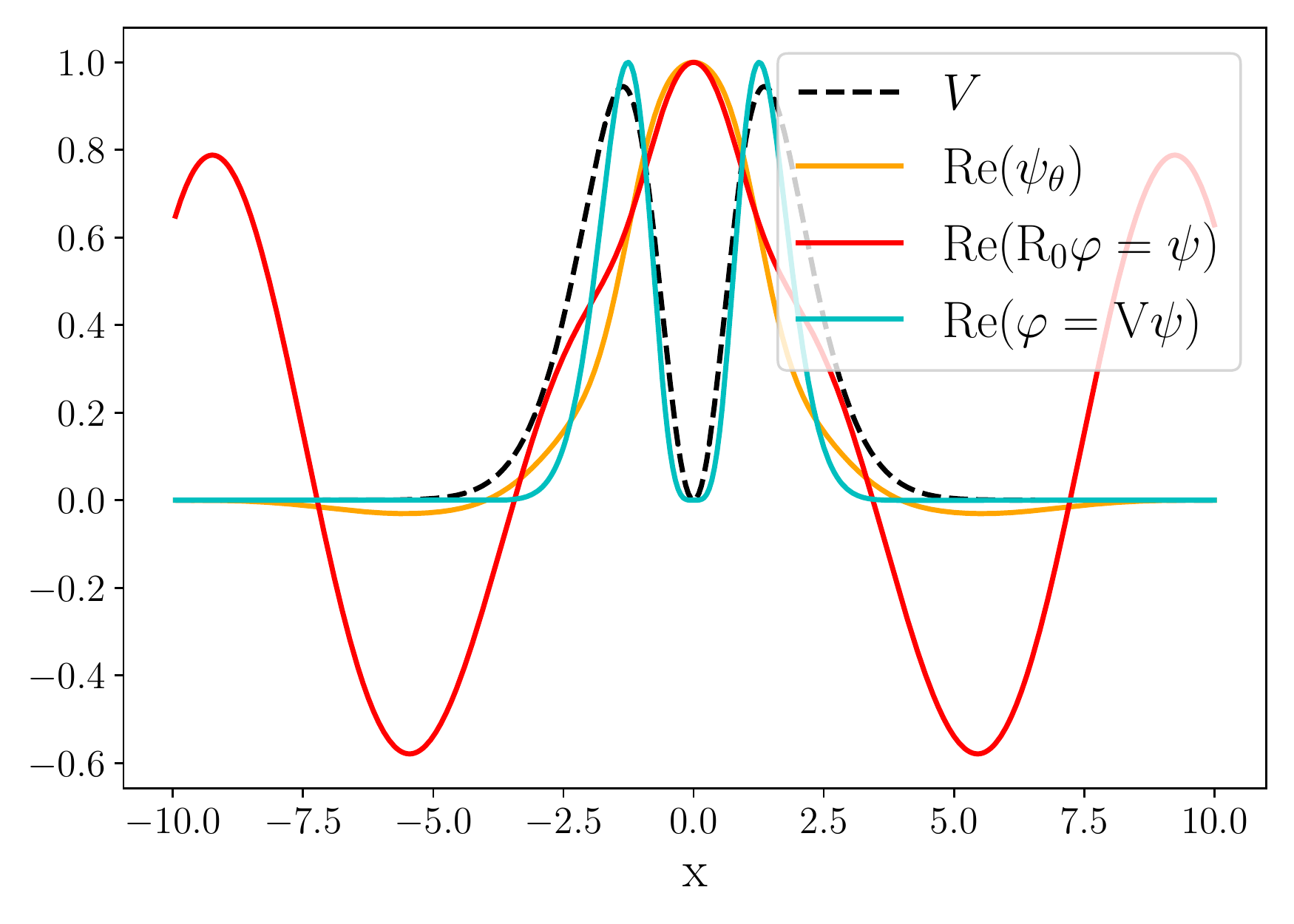}
        \includegraphics[width=0.49\textwidth]{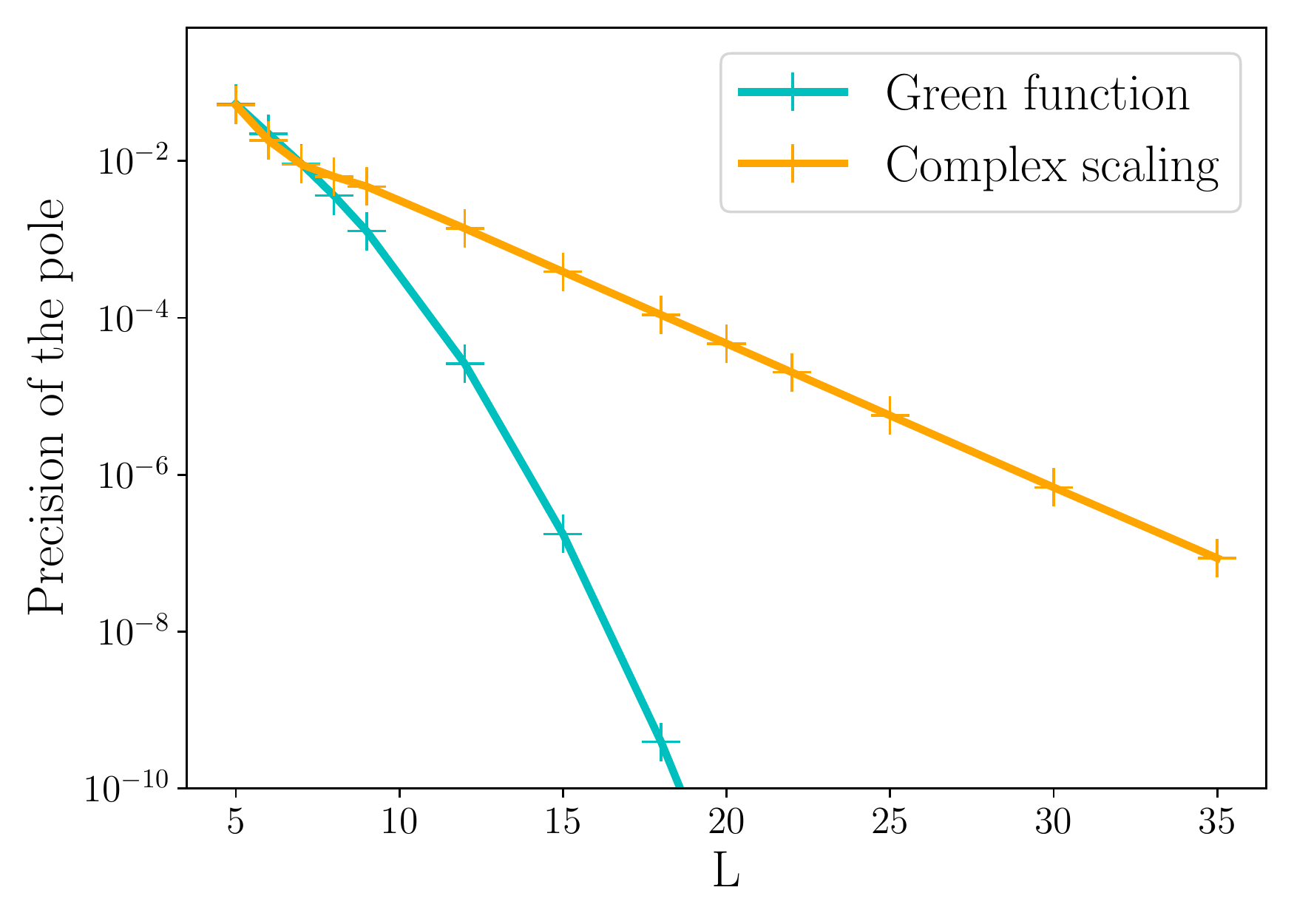}
    \caption{Results for $p_1$.}
    \medskip
  \end{subfigure} 
    \begin{subfigure}[t]{\textwidth}
        \centering
        \includegraphics[width=0.49\textwidth]{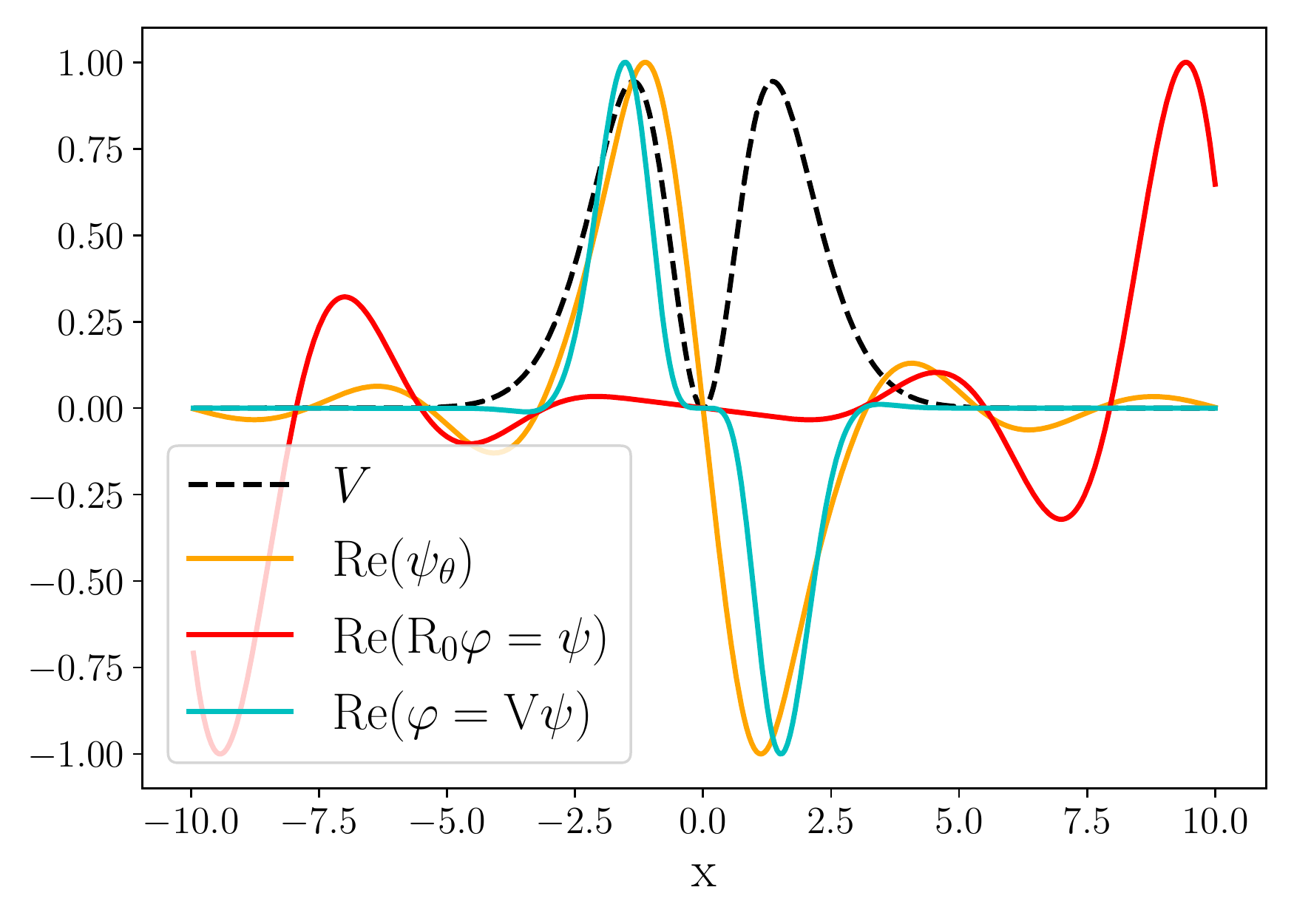}     
        \includegraphics[width=0.49\textwidth]{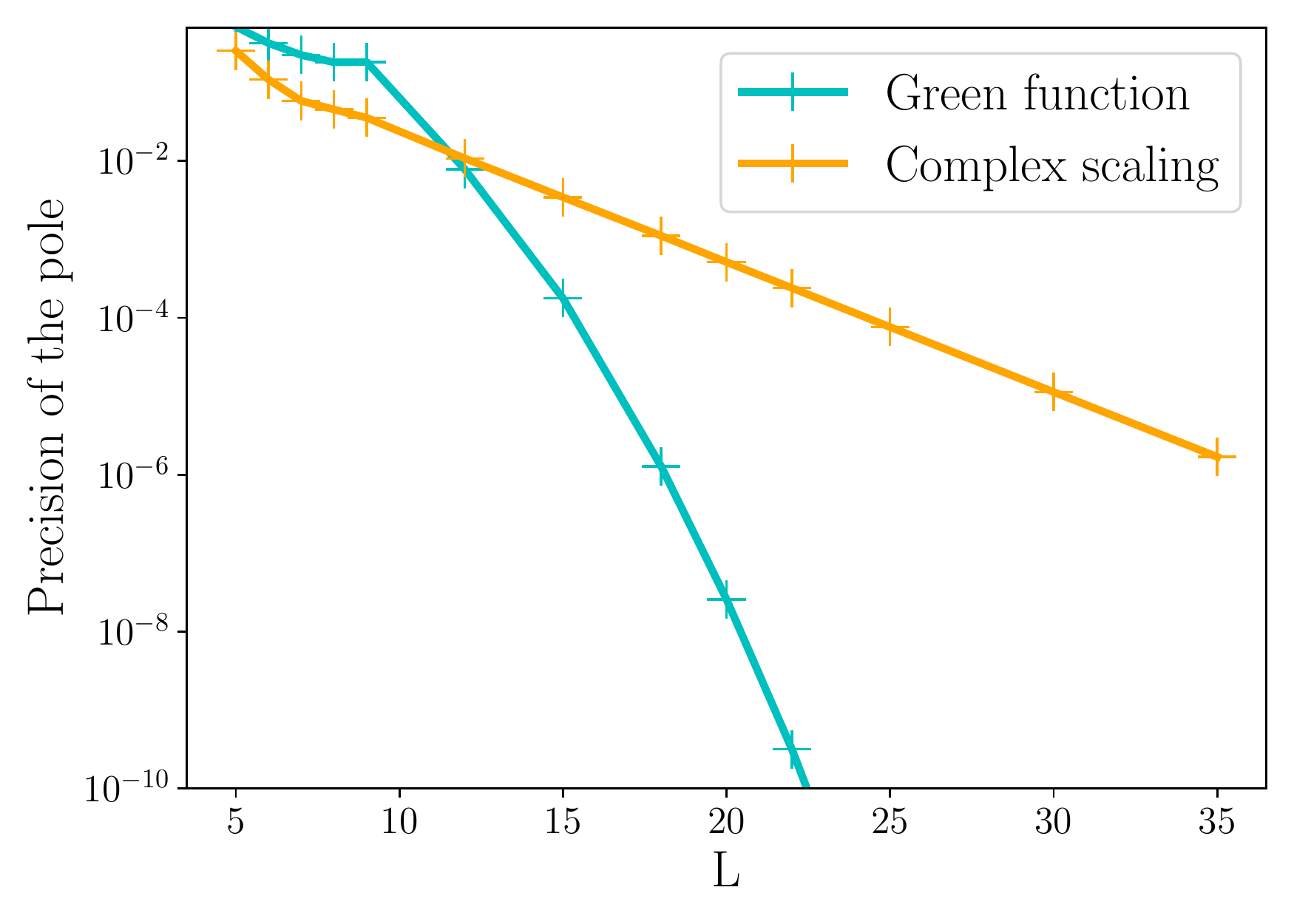}
      \caption{Results for $p_2$.}
    \end{subfigure}

     \caption{Resonant states and convergence rates for the two first
       poles $p_{1}$ and $p_{2}$, for the two methods ($h = 0.05$).
       Left panel: potential (black dashed line), resonant state
       $\psi$ (red), $\varphi = V \psi$ (blue) and rotated resonant
       state $\psi_{\theta}$ (yellow). Right panel: convergence of the
       pole location as a function of the box size. The normalization
       of the states is arbitrary and done for plotting clarity.
       \\
     }
    \label{fig:compare_eigenstates}
\end{figure}

We focus on the two most shallow resonances , denoted by $p_{1} \approx
0.68- 0.13 i$ and $p_2\approx 1.45-1.21i$ in Figure~\ref{fig:green_and_CS}.
We display in Figure~\ref{fig:compare_eigenstates} the function
$\varphi$ associated to the resonant source term of the eigenvector with eigenvalue 0 for the operator $\hat 1-\hat V\hat R_0$ at
the pole, as well as the resonant function $\psi$.
$\varphi$ is localized on the support of $V$, and $\psi$ diverges
(slowly since ${\rm Im}(z) = 0.13$) at infinity.  We also show the
eigenvector $\psi_{\theta}$ of the rotated Hamiltonian (obtained with
complex scaling) associated to the resonance. 

To estimate the computational feasibility of our
approach, we have also compared in Figure~\ref{fig:compare_eigenstates} the convergence rate of the resonant energy $p_{1}$ as a
function of the box size. The grid size $h$ is kept fixed. At finite
$h$, both methods converge to a slightly different value of the
resonance energy, and convergence is assessed relative to the fixed-$h$
value. The asymptotic convergence profile is related to the decay of the respective
objects discretized: Gaussian-like for the Green function method,
which discretizes $\phi = V \psi$, and exponential for the complex scaling
method, which discretizes $\psi_{\theta}$. This gives a better asymptotic convergence for our method than for the complex scaling. 
The performances of our method in small simulation boxes are equivalent or slightly below those obtained by complex scaling.
We however caution here that our aim is not a direct comparison of the performances of the two methods, which is a case-study-dependent
investigation that goes beyond the scope of our paper. It is enough for our purposes to show that the computational complexity of our method is comparable to that of other equivalent approaches.

To summarize, such an approach enables us to \emph{directly} access to the resonant wavefunction and energies,
without the need to transform the Hamiltonian in complex space, and by working on a computational domain which coincides with the support of the potential.
In order to do that, we need to express $R_0$ (otherwise stated the Green function of the Helmholtz Equation) for $z$ in the lower half of the complex plane
in the desired region, and filter the spurious poles with usual techniques, like the complex virial theorem.



\section{Periodic media}
\label{sec:periodic_media}
We now consider the case where
\begin{align*}
  \hat H = \hat H_{0} + \hat V
\end{align*}
with $\hat H_{0}$ a periodic operator and $\hat V$ a localized
potential. Although our method applies to continuous models of the form
$-\Delta + V_{\rm per}(\br) + V(\br)$, with $V_{\rm per}$ being a function having the periodicity
of a lattice, we will present the method using a discrete
tight-binding model (discrete Schrödinger operator) where $\Gamma$ is
a $d$-dimensional lattice. This is done to minimize numerical issues
related to the discretization of the unit cell, and to simplify the
exposition. We refer to the conclusion for perspectives in applying
our method to continuous Hamiltonians.

The tight-binding models we consider have $M$ internal degrees of
freedom per lattice site $\bR \in \Gamma$, with a lattice $\Gamma$ isomorphic to
$\Z^{d}$. The Hilbert space is
$\ell^{2}(\Gamma, \mathbb{C}^{M})$ with wavefunctions indexed by the lattice
site $\bR \in \Gamma$ and the degree of freedom $i \in \{1, \dots,
M\}$. For each $\bR, \bR' \in \Gamma$, $H_{0}(\bR,\bR')$ is a $M \times
M$ matrix satisfying
\begin{align*}
  H_{0}(\bR, \bR') = H_{0}(\bR+\bT, \bR'+\bT)
\end{align*}
for all $\bT \in \Gamma$.
\subsection{Green function of a periodic Hamiltonian}
\label{sec:green_function_periodic_hamiltonian}

Since $\hat  H_{0}$ is periodic, we can label its states as Bloch waves
$\psi_{n\bk}(\bR) = e^{i\bk \cdot \bR} u_{n\bk}(\bR)$ where
$u_{n\bk} \in \mathbb{C}^{M}$ is normalized \cite{alloul}. The index $\bk$ enumerates the Brillouin
zone $\BZ$, a unit cell of the reciprocal lattice $\Gamma^{*}$ (the set
of $\bK \in \R^{d}$
such that $\bK \cdot \bT$ is a multiple of $2\pi$ for all $\bT \in
\Gamma$). The $u_{n\bk}$ are the orthonormal solutions of

\begin{align} \label{eq:bloch0}
   H_{0,\bk} \ket {u_{n\bk}} = \varepsilon_{n\bk} \ket{u_{n\bk}}
\end{align}
where the reciprocal-space kernel $H_{0,\bk} \in \mathbb{C}^{M \times M}$ is the Fourier transform of $H_{0}({\bf 0},\cdot)$:
\begin{align} \label{eq:definitionH}
  H_{0,\bk} = \sum_{\bT \in \Gamma} e^{i\bk \cdot \bT} H_{0}({\bf 0}, \bT).
\end{align}
Conversely, we have
\begin{align*}
  H_{0}(\bR, \bR') = \frac 1 {|\BZ|} \int_{\BZ} e^{i\bk \cdot (\bR- \bR')} H_{0, \bk} d\bk
\end{align*}
and therefore, when ${\rm Im}(z) > 0$,
\begin{align}
  \label{eq:R0_per}
  R_{0}(\bR, \bR';z) &= \frac 1 {|\BZ|} \int_{\BZ} e^{i\bk \cdot (\bR- \bR')}\frac 1 {z - H_{0,\bk}} d\bk\\
   &= \frac 1 {|\BZ|} \int_{\BZ} e^{i\bk \cdot (\bR- \bR')}\sum_{n=1}^{M} \frac{u_{n\bk} u_{n\bk}^*}{z - \varepsilon_{n\bk}} d\bk.
\end{align}
When ${\rm Im}(z) > 0$, this function can be
computed simply by discretizing the integral. This is usually done by
a uniform sampling of the Brillouin zone (Monkhorst-Pack grid \cite{monkhorst}), a
simple quadrature that is very efficient because it integrates
explicitly low-order Fourier harmonics and is therefore exponentially
accurate for analytic functions \cite{trefethen}. However, as the imaginary
part of $z$ decreases to zero, the integrand is more and more
singular, with singularities concentrating on the Fermi surface $S({\rm Re}(z))$, where
\begin{align*}
  S(E) &= \cup_{n=1,\dots,M}S_{n}(E) = \big\{\bk \in \BZ, \exists n \in \{1,\dots,M\}, \varepsilon_{n\bk} = E\big\} \\
  S_{n}(E) &= \big\{\bk \in \BZ, \varepsilon_{n\bk} = E\big\}
\end{align*}
The integration therefore gets less accurate as ${\rm Im}(z)$ is
reduced. The analytic structure of $R_{0}$ is also destroyed by
the discretization: the discretized function becomes meromorphic on
$\mathbb{C}$, with a finite number of poles at the eigenvalues
$\varepsilon_{n\bk}$. In particular, the continuation of $R_{0}$ for ${\rm
Im}(z) < 0$ cannot be obtained. Nevertheless, the continuation exists
around all energies $E \in \R$ such that the following two conditions
hold:
\begin{itemize}
\item Bands do not intersect at energy $E$: $\forall \bk \in S(E),
  \varepsilon_{n\bk} = \varepsilon_{m\bk} \Rightarrow n=m$;
\item The group velocities are nonzero at energy $E$: $\forall \bk \in
  S(E), \varepsilon_{n\bk} = E \Rightarrow \nabla \varepsilon_{n\bk}
  \neq 0$.
\end{itemize}
Under these conditions, the Fermi surface $S(E)$ is a union of smooth
surfaces, and the resolvent can be analytically continued near $E$
\cite{cances:hal-01796582,gerard}. The discrete set of energies $E$ for which one of
the conditions above do not hold generally result in branch point
singularities, and are called \textit{van Hove singularities}. These
conditions make intuitive sense. The resolvent being analytic at a
particular energy means that wave propagation at that energy is
``regular'': Bloch waves have a well-defined non-zero group velocity.
The presence of a van Hove singularity can be interpreted as a
resonance (anomalous wave propagation) on the real axis. Analytic continuation 
of the resolvent near these singularity points is not possible \cite{gerard} or physically meaningful
(because the anomalous wave propagation caused by the van Hove singularity effectively hides any possible resonance).
Our method is therefore only relevant in the energy regions distant from van Hove singularities.

Taking into account these constraints, we realize the analytic continuation of
$R_0$ in the lower complex plane of $z$ by applying a
complex coordinate transformation in the \emph{reciprocal} space
\begin{align}
\bk \to {\mathbf \bkappa}(\bk)= \bk + i\bh(\bk)\;,
\end{align}
which does not modify the position of the \emph{singular} points of the
transformed dispersion relation $\varepsilon_{n \mathbf \kappa}$.
In other terms, $\bkappa (\bk)$ is defined such as to approach the identity transformation in the vicinity of a van Hove singularity.
We call this transformation a Brillouin Complex Deformation (BCD). We now study
how to perform the continuation in practice.

\subsection{Complex deformation of the Brillouin zone}
\label{sec:periodic_integral_computation}
To remedy the two problems of the naive quadrature (loss of accuracy
near the real axis, and loss of analytic continuation), we will deform
the integration domain into the complex plane. This is easier to see
in one dimension, where the integral \eqref{eq:R0_per} can be
deformed using the Cauchy formula. Remarkably, this can be extended to higher dimensions:
\begin{lemma}
\label{lemma:contour_deformation}
  Let $I(\bk)$ be a $\Gamma^{*}$-periodic function, analytic in an
  open set $U = \R^{d} + i [-\eta,\eta]^{d}$. Then, for all $\Gamma^{*}$-periodic and continuously differentiable functions
  $\bh(\bk) : \R^{d} \to [-\eta,\eta]^{d}$, we have
  \begin{align*}
    \int_{\BZ} I(\bk) d\bk = \int_{\BZ} I(\bkappa(\bk)) \det(\bkappa'(\bk)) d\bk = \int_{\BZ} I(\bk + i\bh(\bk)) \det(1 + i\bh'(\bk)) d\bk
  \end{align*}
\end{lemma}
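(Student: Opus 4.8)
The plan is to reduce the $d$-dimensional statement to a one-dimensional Cauchy-type argument applied one coordinate at a time, using a homotopy argument to connect the contour $\bk\mapsto\bk$ to the contour $\bk\mapsto\bk+i\bh(\bk)$. First I would set up the change of variables cleanly: let $\bkappa_t(\bk)=\bk+it\bh(\bk)$ for $t\in[0,1]$, which is $\Gamma^*$-periodic in $\bk$ and, since $\bh$ maps into $[-\eta,\eta]^d$, stays inside the strip $U$ where $I$ is analytic. Define
\begin{align*}
  F(t) = \int_{\BZ} I(\bkappa_t(\bk))\,\det\bigl(1+it\bh'(\bk)\bigr)\,d\bk .
\end{align*}
The goal is to show $F(1)=F(0)$, and in fact $F$ is constant. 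I would show $F'(t)=0$ by differentiating under the integral sign (justified because $I$ is holomorphic on a neighborhood of the compact image, hence all derivatives are bounded there, and $\bh,\bh'$ are continuous on the compact torus). The derivative of the integrand splits into two terms by the product rule: one from differentiating $I(\bkappa_t(\bk))$ and one from differentiating the determinant.

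The key algebraic step is to recognize that these two terms combine into a total divergence in $\bk$. Writing $\partial_t \bkappa_t(\bk)=i\bh(\bk)$, the first term is $i\,\nabla I(\bkappa_t(\bk))\cdot\bh(\bk)\,\det(1+it\bh'(\bk))$. For the determinant term, Jacobi's formula gives $\partial_t\det(1+it\bh'(\bk)) = \det(1+it\bh'(\bk))\,\mathrm{tr}\bigl((1+it\bh'(\bk))^{-1}\, i\bh'(\bk)\bigr)$. The claim I would verify — this is the heart of the computation — is that the sum equals $\operatorname{div}_{\bk}\bigl(i\,I(\bkappa_t(\bk))\,\mathrm{cof}(1+it\bh'(\bk))^{T}\bh(\bk)\bigr)$, i.e. the divergence of the vector field obtained by transporting $i\bh$ through the cofactor matrix and multiplying by $I$. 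This is the standard Piola-identity / divergence-structure fact underlying changes of variables: the columns of the cofactor transpose are divergence-free, so only the ``advective'' part survives, and it reproduces exactly the chain-rule term $\nabla I\cdot\bkappa_t^{-1}$-transported gradient. Once this identity is in hand, $F'(t)=\int_{\BZ}\operatorname{div}_{\bk}(\cdots)\,d\bk=0$ by the divergence theorem, because the vector field is $\Gamma^*$-periodic (a consequence of periodicity of $I$, $\bh$, $\bh'$) and the Brillouin zone is a fundamental domain, so the boundary contributions cancel in pairs.

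An alternative, perhaps more elementary route that avoids the Piola identity is induction on $d$: fix the last $d-1$ coordinates and apply the one-dimensional contour-shift (rectangle Cauchy theorem, using $\Gamma^*$-periodicity to cancel the two ``vertical'' sides) to replace $k_d\mapsto k_d$ by $k_d\mapsto k_d+ih_d(\bk)$, then peel off coordinates one at a time; the bookkeeping of the partial Jacobian factors $\det(1+i\bh')$ appearing incrementally is the only subtlety, and it is handled by noting that at each stage the relevant $1\times1$ ``new'' Jacobian entry multiplies the running product correctly. I would likely present the homotopy/divergence version as the main proof since it is coordinate-free and makes the role of periodicity transparent.

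The main obstacle I anticipate is the divergence-structure identity in the second paragraph: verifying that the chain-rule term plus the Jacobi-formula term assemble into an exact divergence requires care with the cofactor matrix and its divergence-free rows, and one must be careful that $1+it\bh'(\bk)$ is invertible for all $t\in[0,1]$ — which I would ensure either by assuming $\eta$ small enough that $\|\bh'\|_\infty<1$, or by working directly with the cofactor matrix (polynomial in the entries) so that no invertibility is needed and the identity holds by continuity/polynomial identity. Everything else — differentiation under the integral, periodicity of the transported field, application of the divergence theorem on the torus — is routine.
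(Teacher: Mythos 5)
Your main (homotopy/divergence) argument is correct, but it is a genuinely different and considerably heavier route than the paper's. The paper's proof is a one-liner in the deformation \emph{parameter}: set $J(\alpha)=\int_{\BZ} I(\bk+\alpha\bh(\bk))\det(1+\alpha\bh'(\bk))\,d\bk$ for $|\alpha|\le 1$, note that $J$ is analytic in $\alpha$ (the argument $\bk+\alpha\bh(\bk)$ stays in $U$ because $|\mathrm{Im}\,\alpha|\le 1$ and $\bh$ maps into $[-\eta,\eta]^d$), observe that $J(\alpha)=J(0)$ for small real $\alpha>0$ by the ordinary real change of variables (the map $\bk\mapsto\bk+\alpha\bh(\bk)$ is then a periodic diffeomorphism of the torus), and conclude $J(i)=J(0)$ by the identity theorem. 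So the paper analytically continues in $\alpha$ and lets the real change-of-variables formula do all the work, whereas you deform along the purely imaginary direction $t\mapsto\bk+it\bh(\bk)$ and pay for it with the Jacobi/Piola cofactor computation. Your version is more explicit and exposes the divergence structure, and your choice to work with the adjugate rather than the inverse correctly disposes of the invertibility worry; but it silently requires $\bh\in C^2$ (or a mollification step), since the Piola identity $\sum_j\partial_j\operatorname{cof}(1+it\bh'(\bk))_{ij}=0$ differentiates $\bh'$, while the lemma only assumes $\bh\in C^1$ --- the paper's route gets $C^1$ for free. One genuine warning about your ``alternative, more elementary'' route: peeling off coordinates one at a time does not work as stated, because after complexifying $k_1\mapsto k_1+ih_1(\bk)$ the integrand is no longer analytic in $k_2$ (it depends on $k_2$ through $h_1$, which is merely $C^1$), so the one-dimensional Cauchy theorem cannot be applied to the next coordinate. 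Keep the homotopy version as the main proof, or adopt the paper's parameter-continuation argument, which is both shorter and valid under the stated hypotheses.
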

\begin{proof}
  Consider for $\alpha \in \mathbb{C}$ the function
  \begin{align*}
    J(\alpha) = \int_{\BZ} I(\bk+\alpha \bh(\bk)) \det(1 + \alpha \bh'(\bk)) d\bk,
  \end{align*}
  analytic for $|\alpha| \le 1$. For $\alpha$ real, positive and
  sufficiently small, we have by a change of variables $J(\alpha) = J(0)$. By analytic
  continuation, it follows that
  $J(i) = J(0)$.
\end{proof}
Applied to the integrand
\begin{align*}
  I_{z}(\bk) = \frac 1 {|\BZ|} e^{i\bk \cdot (\bR- \bR')}\frac1 {z-H_{0,\bk}},
\end{align*}
this formula provides an explicit representation of the analytic
continuation of $R_{0}(\bR,\bR';z)$ from the upper complex plane to
the lower. More explicitly, taking a path $z(t)$ originating from a point in the upper complex plane and descending into the lower
plane, the formula
\begin{align*}
  R_{0}(\bR,\bR';z) = \frac 1 {|\BZ|} \int_{\BZ} I_{z}(\bkappa(\bk)) \det(\bkappa'(\bk)) d\bk
\end{align*}
provides a continuation of $R_{0}(\bR,\bR';z)$ along the path as 
long as the eigenvalues of $H_{0,\bk+i\bh(\bk)}$ for $\bk \in \BZ$
do not intersect the path. Appropriate choices of $\bh$ can
therefore extend the region of validity of the integral formula into
the lower complex plane.
Then, the deformed integral can be discretized in a standard
Monkhorst-Pack grid:
\begin{align*}
  R_{0}(\bR,\bR';z) &\approx \frac {|\BZ|} {N^{d}} \sum_{\bk \in \BZ_{N}} I_{z}(\bkappa(\bk)) \det(\bkappa'(\bk)) d\bk\\
  &= \frac 1 {N^{d}} \sum_{\bk \in \BZ_{N}}  e^{i(\bk+i\bh(\bk)) \cdot (\bR- \bR')}\frac1 {z-H_{0,(\bk+i\bh(\bk))}} \det(1 + i \bh'(\bk)) d\bk
\end{align*}
where $\BZ_{N}$ is the set of $N^{d}$ points in the Monkhorst-Pack grid.



The task at hand is now to choose $\bh(\bk)$ so that
$\varepsilon_{n,\bk+\bh(\bk)}$ avoids all complex numbers $z$ in
the path of the analytic continuation we are interested in. As
mentioned before, when $z$ approaches the real axis, the integrand
concentrates on the Fermi surface. Fix $E \in \R$, and assume
that we are interested in the analytic continuation on a line
descending from the upper complex plane and passing through $E$.
Assume that $n_{0},\bk_{0}$ are such that
$\varepsilon_{n_{0}\bk_{0}} = E$. Then, near $\bk_{0}$, we have
\begin{align*}
  \varepsilon_{n_{0}\bk} \approx E + \nabla\varepsilon_{n_{0}\bk_{0}} \cdot (\bk-\bk_{0})
\end{align*}
and therefore, for small $\bh$ and $\bk$ close to $\bk_{0}$:
\begin{align*}
  {\rm Im} (\varepsilon_{n_{0}\bk+i\bh(\bk)}) \approx \nabla\varepsilon_{n_{0}\bk_{0}} \cdot \bh(\bk)
\end{align*}
By choosing $\bh(\bk)$ to be oriented in the direction of
$-\nabla \varepsilon_{n_{0}\bk_{0}}$ in a neighborhood of $\bk_{0}$,
we can ensure that $\varepsilon_{n_{0}\bk}$ has a negative imaginary
part whenever its real part becomes close to $E$. When
$\varepsilon_{n\bk}$ is far from $E$, we should set $\bh(\bk)$ to
zero. We use the general form
\begin{align}
  \label{eq:ki_k}
  \bh(\bk) = - \sum_{n \in 1}^{M} \alpha \nabla \varepsilon_{n\bk}\, \chi\left( \frac{\varepsilon_{n\bk} - E}{\Delta E } \right)
\end{align}
where $\alpha$ and $\Delta E $ are  constants with $\alpha>0$, and $\chi(E)$ is a
cutoff function, equal to $1$ at $0$ and zero for large values (in
practice, we use a Gaussian). 

To summarize, we employ as a definition of our BCD transformation the following function:
\begin{align*}
\bkappa(\bk; \alpha, E, \Delta E) \equiv \bk - i  \alpha\sum_{n=1}^{M} \nabla \varepsilon_{n\bk}\, \chi\left( \frac{\varepsilon_{n\bk} - E}{\Delta E } \right)
\end{align*}

The appropriate choice of the parameters is not trivial. The parameter
$\alpha$ scales the whole deformation. It
should ideally be chosen large enough for the deformation to be
effective, but choosing it too large invalidates the first-order
expansion above. The parameter $\Delta E $ must be chosen small enough 
to restrict the deformation to the surrounding of $\varepsilon_{n\bk}$, 
but choosing it too small results in rapid variations of h and requires 
a finer discretization of the Brillouin zone to integrate correctly.

For $\bh(\bk)$ to be a smooth function,
$\nabla\varepsilon_{n\bk}$ should be smooth whenever
$\chi(\tfrac{\varepsilon_{n\bk}-E}{\Delta E })$ is nonzero. When $\Delta E $
is small, this is possible under the first condition outlined in
Section~\ref{sec:green_function_periodic_hamiltonian}: near the Fermi
surface $S(E)$, bands should not cross, so that
$\nabla \varepsilon_{n\bk}$ is smooth. In order for the deformation to
produce its expected effect, $\nabla \varepsilon_{n\bk}$ should not be
small at the Fermi level, recovering the second condition in
Section~\ref{sec:green_function_periodic_hamiltonian}.

More quantitatively, we collect the conditions that $\alpha$, $\Delta E $ and
the discretization parameter $N$ should satisfy in order to ensure
a good approximation of the analytic continuation of
$R_{0}(\bR,\bR';z)$ near an energy $E$:
\begin{itemize}
    \item To ensure a smooth $\bh$,
    \begin{align*}
        \Delta E &\ll{\rm{dist}}(E, z) \quad \mbox{for all van Hove singularities $z$.}
    \end{align*}
    \item To ensure a valid first-order approximation,
    \begin{align*}
        \alpha | \nabla \varepsilon_{n\bk}| & \ll {\rm{diam}}(\mathcal{B}) \quad \forall \bk \in S_{n}(E)
    \end{align*}
    \item To ensure $z$ remains above the deformed spectrum at first
      order, when ${\rm Im}(z) < 0$,
    \begin{align*}
      |\rm{Im}(z)| & \ll \alpha  |\nabla \varepsilon_{n\bk}| ^2 \quad \forall \bk \in S_{n}(E)
    \end{align*}
    \item To ensure an accurate integration,
    \begin{align*}
        \frac{\rm{diam}(\mathcal{B})}{N} & \ll \min\left(\frac{\Delta E }{|\nabla \varepsilon_{n\bk}|}, \alpha  |\nabla \varepsilon_{n\bk} |\right)\quad \forall \bk \in S_{n}(E)\\
        N & \gg |\bR-\bR'|
    \end{align*}
  \end{itemize}
 
  Even though good results might be obtained even violating these
  conditions, they give useful rules of thumb to choose the parameters
  for a given system. Note that at van Hove singularities it is
  impossible to continue the resolvent. Therefore the procedure
  outlined above is only applicable outside of singularities, and we
  will seek resonances there.

\section{Applications}
\label{sec:apps}
We now apply the  method developed above to local perturbations of
tight-binding Hamiltonians.


\subsection{1D tight-binding for a diatomic chain}

\subsubsection{Perfect crystal}
Let us now consider a one dimensional chain containing two types of
atoms alternatively. The site energies of the atoms are $E_a$ and $E_b$, and the electron can hop from one site to another with a hopping constant 1:
\begin{align*}
    H_0&=\begin{pmatrix}
    \ddots &\ddots&\ddots\\
    & 1 &E_a & 1   \\
    &&1 & E_b & 1 \\
    &&&1&E_a & 1   \\
    &&&&1 & E_b & 1 \\
    &&&&&\ddots & \ddots  & \ddots
    \end{pmatrix}
  \end{align*}
The Bloch
transform of $H_{0}$ is
\begin{align*}
    H_{0,k}&=\begin{pmatrix} E_a & e^{-ik}+1 \\
    e^{ik}+1 &E_b 
    \end{pmatrix}
\end{align*}
with bands
\begin{align*}
     \varepsilon_{\pm,k}&
    =\frac{E_a+E_b}{2}\pm \sqrt{\frac{(E_a-E_b)^2}{4}+4\cos^2\left(\frac{k}{2}\right)}
\end{align*}
In numerical experiments, we choose $E_a=1$, $E_b=0$, so that the
continuous spectrum of $H_{0}$ is  approximately $[-1.56, 0] \cup [1, 2.56]$.



We can apply the method of the previous section to obtain the analytic
continuation of the $2\times 2$ matrix $R_{0}(\bR,\bR';z)$. We show in
Figure~\ref{fig:deformation_dia} the BCD we use for
the energy $E=2$.

\begin{figure}[h!]
\centering
    \begin{subfigure}[t]{0.45\textwidth}
        \centering
        \includegraphics[width=\textwidth]{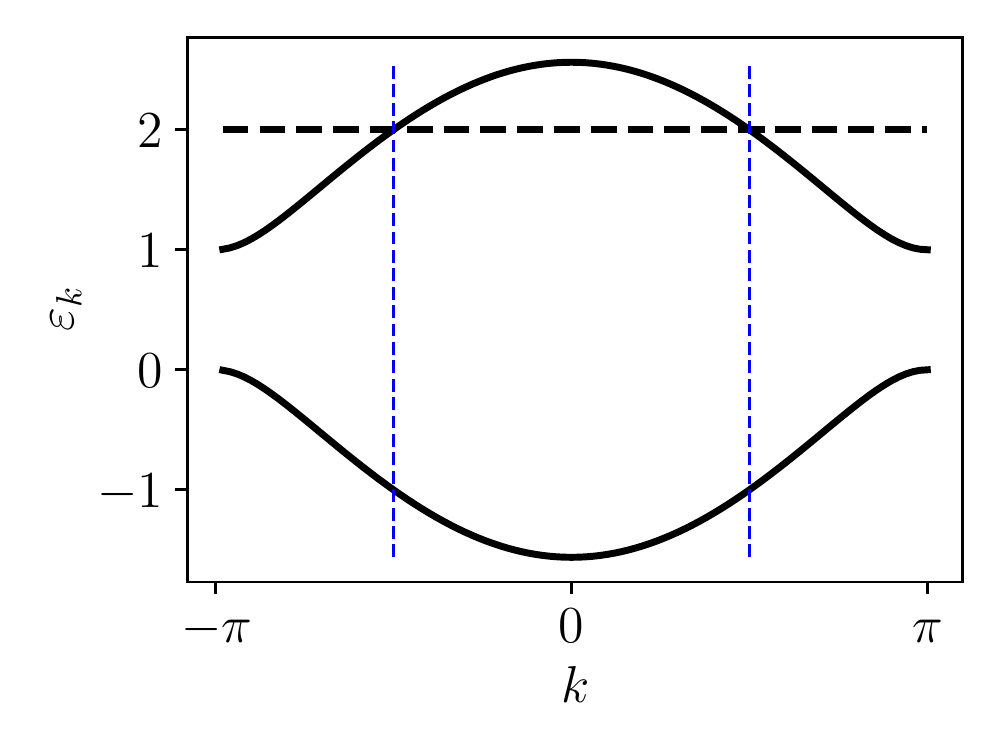}
         \caption{Band structure $\varepsilon_{\bk}$.}
     \end{subfigure}
    \quad
    \begin{subfigure}[t]{0.45\textwidth}
        \centering
         \includegraphics[width=\textwidth]{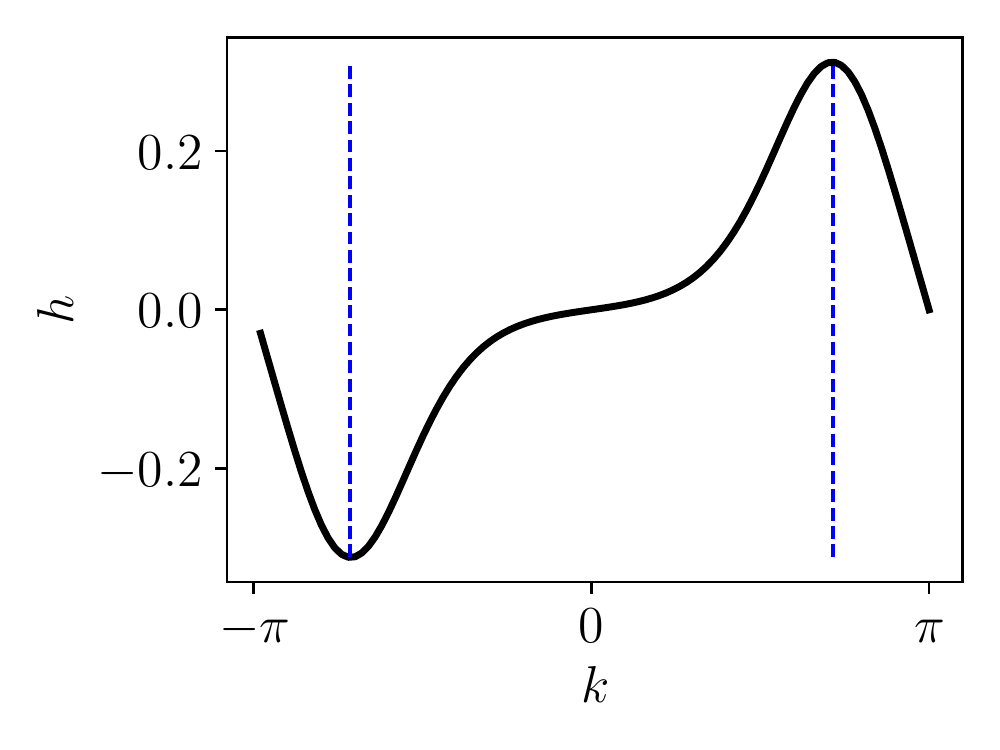}     
\caption{Profile of $\bh$ from \eqref{eq:ki_k}.}
     \end{subfigure}
     \caption{BCD at the energy of interest $E=2$ for the diatomic chain
       ($\alpha=0.4$, $\Delta E =0.7$, $\alpha=1$). The contour is deformed in
       the direction $-\nabla \varepsilon$ near the Fermi surface $S(E)$
       (dashed blue lines) at energy $E$ (dashed black line).}
    \label{fig:deformation_dia}
 \end{figure}

We apply it in Figure~\ref{fig:map_green_diatomic} to compute the continuation of the
trace per unit cell of the Green function
\begin{align*}
  {\rm Tr}(R_0(0,0;z)) = \frac 1 {|\BZ|}\int_{\BZ}{\rm Tr}\left( \frac 1 {z - H_{0,\bk}} \right) d\bk
\end{align*}
whose imaginary part for real $z$ is equal to $- \pi $ times the density of
states. The BCD moves the discretized continuum of poles further down
in the complex plane, allowing us to compute the continuation of
the resolvent across the spectrum of $\hat H_{0}$.

\begin{figure}[h!]
    \centering
    \includegraphics[width=0.5\textwidth]{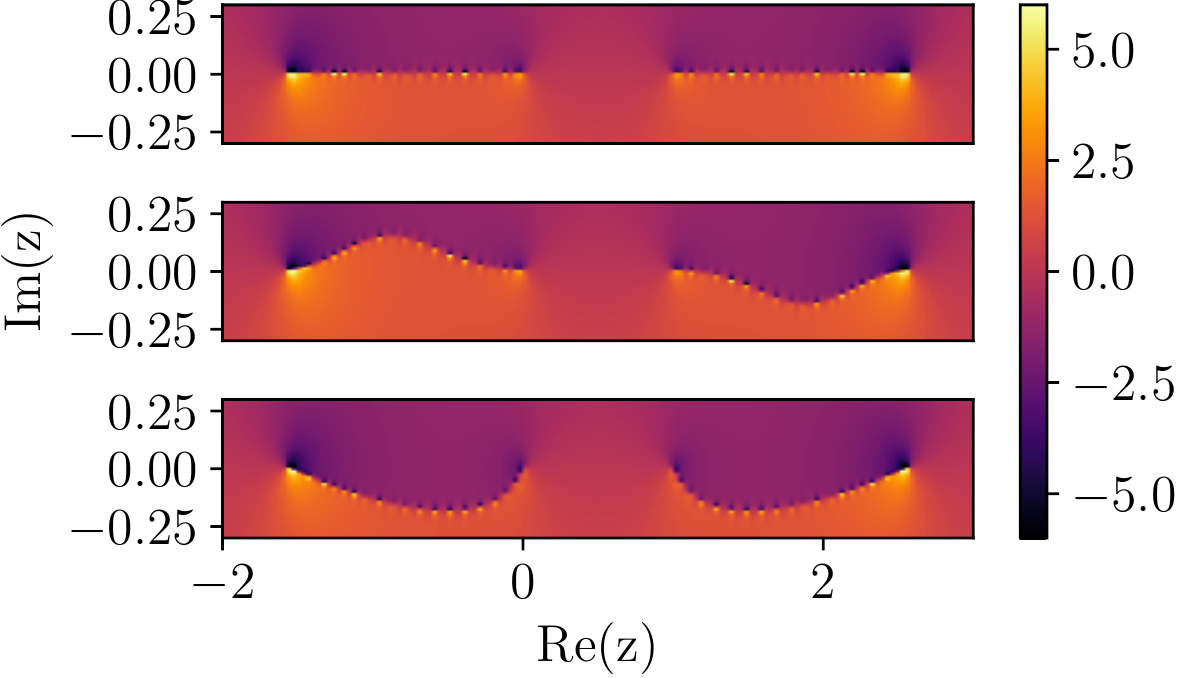}
    \caption{Imaginary part of the trace by cell of the Green function
      for the diatomic model: with no deformation (top), with a BCD with $E=2$
      (middle), and with  $E={\rm Re}(z)$ for every $z$ (bottom). Parameters are $N=50$, $\alpha=0.3$, $\Delta E =0.5$. }
    \label{fig:map_green_diatomic}
\end{figure}

\subsubsection{Defect}
To obtain resonances, we now perturb the system locally in the following manner:
\begin{align}
\label{eq:defect}
 	H=H_0+V&= \begin{pmatrix}
     &  \ddots \\
     \ddots & \ddots & 1 \\
     & 1 & E_a & \epsilon \\
     &    &\epsilon & E_b & 1 \\
     &    &         & 1& E_a &1 \\
     &    &    &     & 1& E_b &1 \\
     &    &    &   && 1& E_a &\epsilon \\
     &    &    &   & &  &\epsilon& E_b &1 \\
     &    &  &  &  & &  &1&\ddots &\ddots \\
     &  &  &  &  &  & &  &\ddots& &\\
    \end{pmatrix}
\end{align}
This defect is chosen so that, at $\epsilon=0$, there is a central
region unconnected to the rest of the chain, with four eigenvalues ($-1.19$, $-0.29$, $1.29$, $2.19$), embedded in the continuous spectrum
of $H_{0}$. At $\epsilon > 0$, the embedded eigenvalues turn into
resonances. We show this by plotting the smallest singular value of
$\hat 1
- \hat V \hat R_{0}(z)$ as a function of
$z$ in Figure~\ref{fig:poles_diatomic}, for $\epsilon=0.2$. We find
resonances close to the eigenvalues at $\varepsilon=0$, as expected.


\begin{figure}[h!]
        \centering
         \includegraphics[width=.7\textwidth]{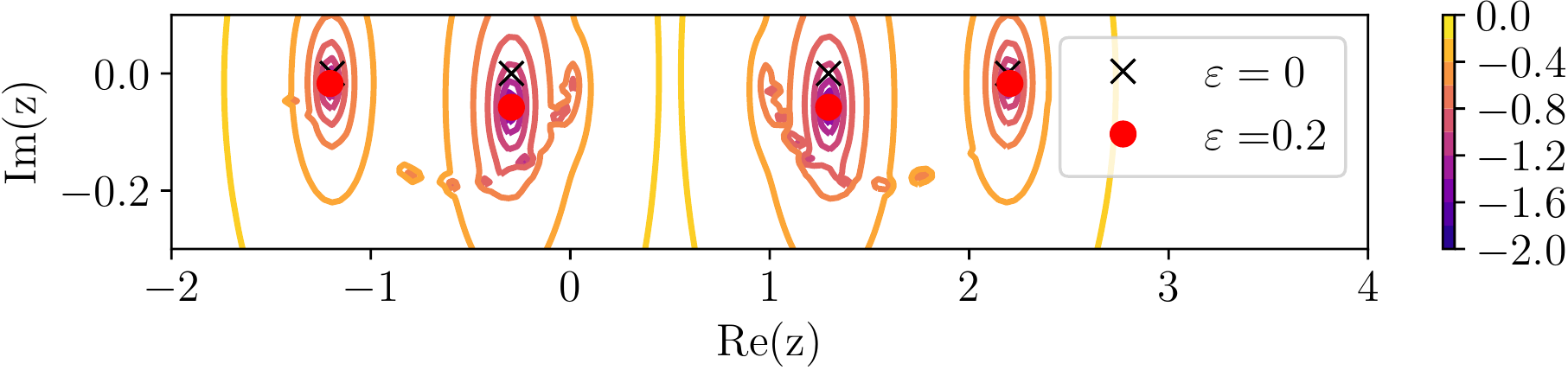} 
         \caption{Base 10 logarithm of the smallest singular value of
      $\hat 1-\hat V\hat R_0(z)$, with the same parameters as in Figure~\ref{fig:map_green_diatomic}. The black crosses are eigenvalues at $\varepsilon=0$,
      the red dots are resonances at $\varepsilon=0.2$
    }
    \label{fig:poles_diatomic}
 \end{figure}

For this simple monodimensional model, computing resonances can be
done using an explicit computation of the Green function (or of the
scattering matrix) through Schur complements. We verified that the
results given by our method on a simple case are in agreement with the
resonances found with scattering matrices \cite{hatano}.

  \subsection{Graphene}
\label{sec:graphene}

\subsubsection{Perfect crystal}
In this section, we study the standard nearest-neighbour model of
graphene with hopping parameter $t$, and lattice vectors
$\mathbf{a}_{1} = (\sqrt{3}/2, 1/2), \mathbf{a}_{2} = (\sqrt{3}/2, -1/2)$. The Bloch transform of the
Hamiltonian writes:
\begin{align*}
    H_{\bk}=\begin{pmatrix} 0 & -t(1+e^{i \bk \cdot \mathbf{a_1}}+ e^{i \bk\cdot  \mathbf{a_2}}) \\
    -t(1+e^{-i \bk \cdot \mathbf{a_1}}+ e^{-i \bk \cdot \mathbf{a_2}}) & 0
    \end{pmatrix}
\end{align*}
The Brillouin zone is a unit cell of the lattice spanned by the
reciprocal vectors
${\mathbf b}_{1} = 2\pi(1/\sqrt 3, 1), {\mathbf b}_{2} = 2\pi(1/\sqrt
3, -1)$. In the following, the plots in the Brillouin zone will be
given in the reduced coordinate system
$\bk = k_{1} {\mathbf b}_{1} + k_{2} {\mathbf b}_{2}$, in which we
take the Brillouin zone as $[-1/2, 1/2]^{2}$. We take $t=1$ throughout.

The dispersion relation is represented Figure
\ref{fig:surface_graphene}. 
\begin{figure}[h!]
    \centering
    \begin{subfigure}[t]{0.45\textwidth}
    \centering
    \includegraphics[width=\textwidth]{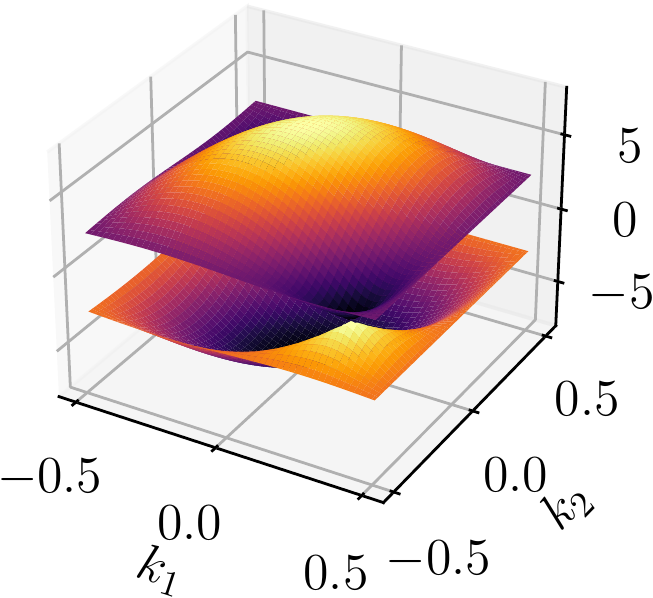}
    \caption{Dispersion relation $\varepsilon(\bk)$ of graphene.}
    \label{fig:surface_graphene}
    \end{subfigure}
    \hfill
    \begin{subfigure}[t]{0.45\textwidth}
    \centering
    \includegraphics[width=\textwidth]{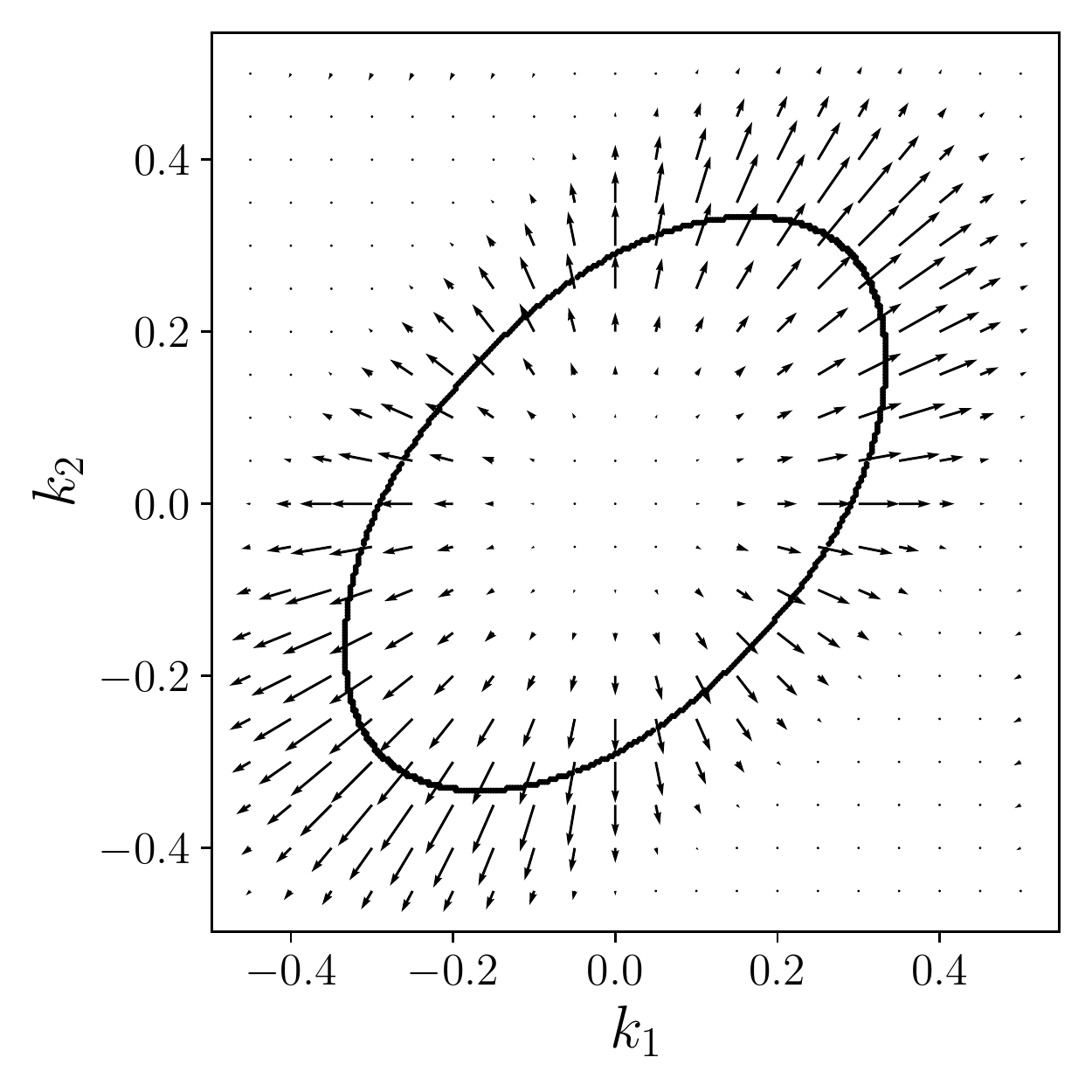}
    \caption{$\bh(\bk)$ at $E=2$. The black line is the Fermi surface.}
    \label{fig:ki_graphene}
    \end{subfigure}
    \caption{Dispersion relation and BCD for the graphene.}
\end{figure}

Our BCD, represented in Figure~\ref{fig:ki_graphene} for $E=2$, allows
us to compute the Green function (Figure~\ref{fig:map_green_graphene})
and therefore the density of states (Figure~\ref{fig:DOS_graphene}). 
The density of states (DOS) is defined as 
\begin{align*}
  D(z)&=-\frac{1}{\pi}\lim_{\eta \rightarrow 0^+}{\rm Im}\left(\underline{{\rm Tr}}\left((z+i\eta -H)^{-1}\right)\right)
\end{align*}
where $\underline{\rm{Tr}}$ is the trace by cell of the operator.

The dispersion relation is singular at the two Dirac points, at which
the multiplicity of the eigenvalues is double and where the behaviour
of the eigenvalues is linear at first order. The BCD
method is efficient for the $E$ which are not too close to these
singularities or to the other van Hove points, where the gradient
vanishes ($E=0, \pm t, \pm 3t$); the BCD is unable to move these
singularities away from the real axis.

We compare the DOS to both the exact analytic formula from \cite{2014}
and the standard approximation
\begin{align*}
  D_{N,\eta}(E) \approx \frac 1 {N^{2}} \sum_{\bk \in \BZ_{N}}\sum_{n=1}^{M} g\left( \frac{\varepsilon_{n\bk} - E}{\eta} \right)
\end{align*}
with $g$ a normalized Gaussian and $\BZ_{N}$ a Monkhorst-Pack grid with
$N^{2}$ points. This method converges to the DOS as $N \to \infty$
then $\eta \to 0^{+}$ (but not the reverse). Optimizing $\eta$ as a
function of $N$ gives a convergence of the DOS as $1/N^{2}$ outside of
the van Hove singularities
\cite{dupuy2021finite,cances:hal-01796582,colbrook2021computing}. By
contrast, the BCD method directly gives an exponential convergence as a
function of $N$. Even for small values of $N$, we observe that the BCD
gives more accurate (but less smooth) results.

\begin{figure}[h!]
    \centering
    \includegraphics[width=\textwidth]{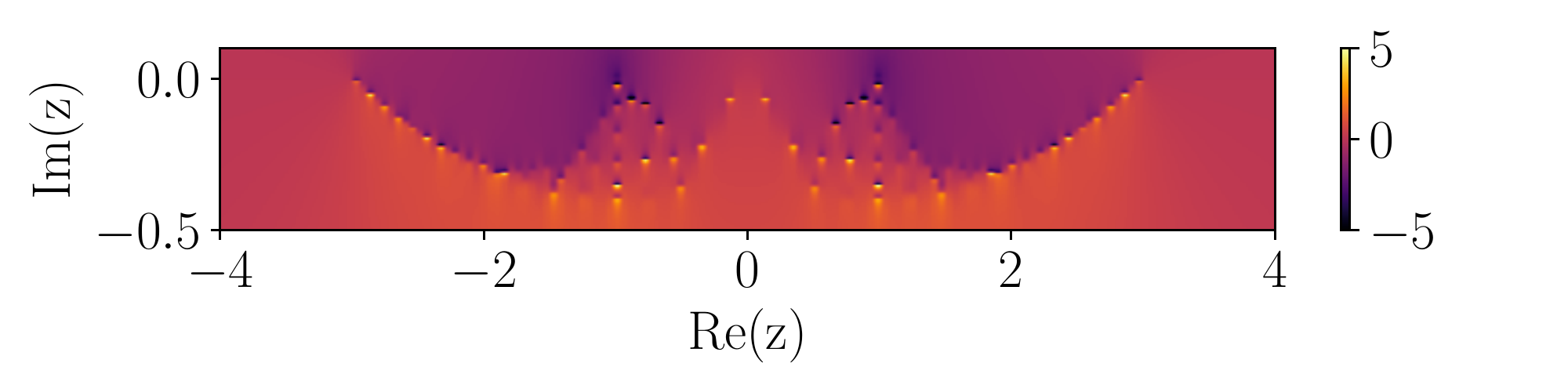}
    \caption{Imaginary part of the trace per unit cell of the Green
      function for the nearest-neighbor model of graphene. Parameters
      are $N=13$, $\alpha=0.4$, $\Delta E =0.5$.}
    \label{fig:map_green_graphene}
\end{figure}
\begin{figure}[h!]
    \centering
    \includegraphics[width=\textwidth]{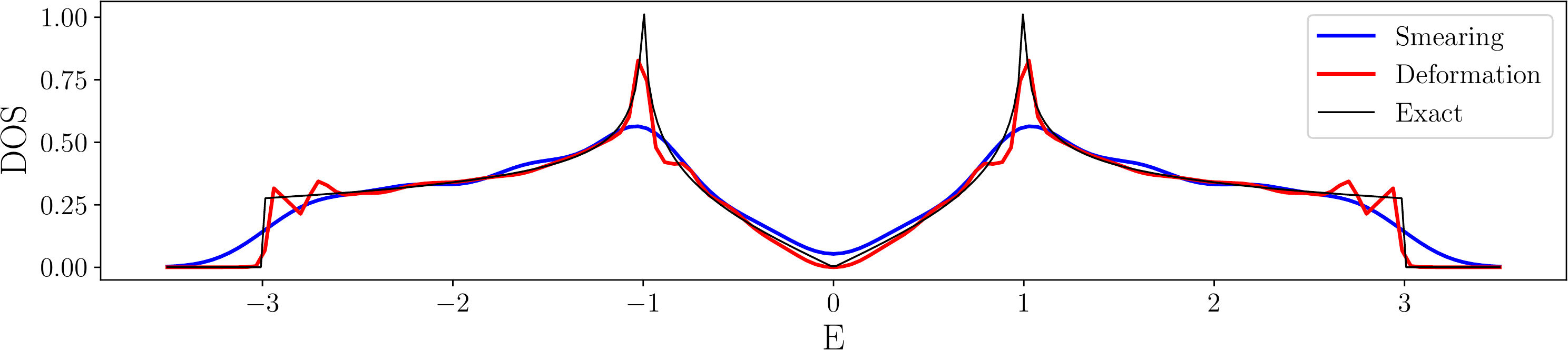}
    \caption{DOS of the nearest-neighbor model of graphene. The red
      plot is computed with our BCD method ($N=9$, $\alpha=0.3$,
      $\Delta E =0.4$), and the black plot is the reference DOS. The
      blue curve is the DOS obtained by the Gaussian
      smearing method for the same $N=9$, with an optimized value of
      $\eta=0.3$.}
    \label{fig:DOS_graphene}
\end{figure}

\subsubsection{Defect}
We now put an additional atom (adatom) on the surface of the
graphene as a perturbation. We implement this by a new site, linked to
a single site of the lattice with a hopping constant $\epsilon$, and a
site energy of $E_d$. This is similar to the "top" configuration of
\cite{PhysRevB.97.075417}. With a small hopping constant, we expect a
resonance, since the configuration $\epsilon=0$ gives a bound state at
energy $E_d$. We select the defect so that the resonance will be close
to $E=2$ (away from van Hove singularities). Taking $\epsilon=0.4$,
$E_{d}=2$, we expect a resonance with a real part close to $2$; the
Fermi Golden rule can be used to estimate the imaginary part to second
order in $\varepsilon$:
\begin{align*}
  {\rm Im}(z) \approx \varepsilon^{2} R_{0}(0,0; E_{d}+i0^{+})_{11}.
\end{align*}
We use the method BCD in the previous section (with the same
parameters $\alpha= 0.4, \Delta E = 0.5$) to compute $R_{0}$, and
obtain approximately ${\rm Im}(z) \approx -0.0854i$.

To compute the resonance exactly, we proceed using the same method as
before, solving the equation \eqref{eq:phi}. Since the defect $\hat V$
only links one site of the lattice to the adatom, we only need to
compute one coefficient of the Green function $R_0$. We look for zeros
of $\hat 1-\hat V \hat R_0$ in Figure~\ref{fig:svd_resonances_graphene}, and find a pole near
$2.062-0.0858i$, which is consistent with the Fermi Golden rule. The
convergence of the method (Figure~\ref{fig:convergence_graphene}) is
exponential with respect to $N$. We plot the resonance state in
Figure~\ref{fig:eigenstate_graphene}.

\begin{figure}[h!]
\centering
    \includegraphics[width=\textwidth]{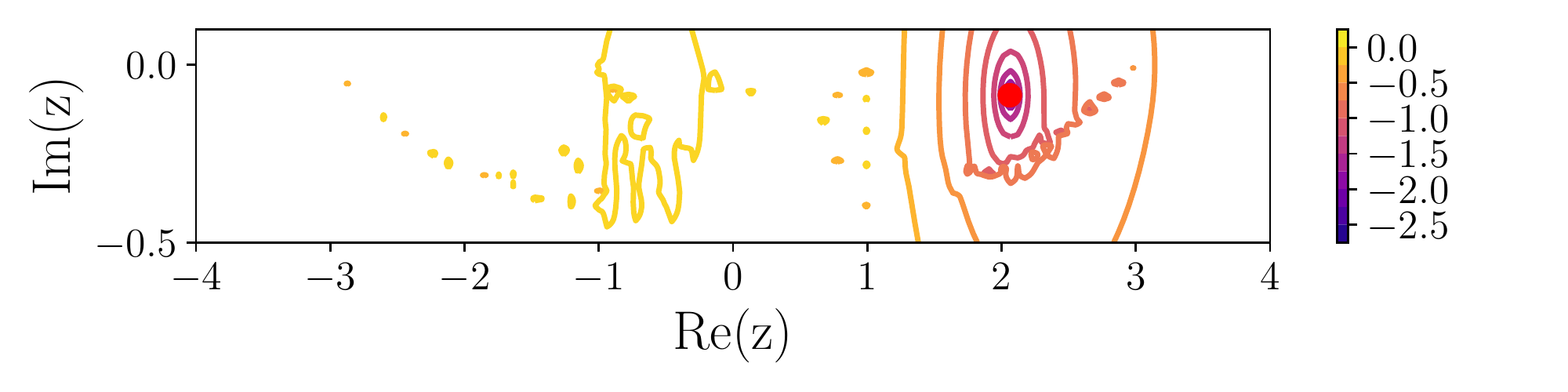}
    \caption{Base 10 logarithm of the smallest singular value of $\hat
      1-\hat V \hat R_0(z)$ for an adatom on graphene.}
    \label{fig:svd_resonances_graphene}
\end{figure}

\begin{figure}[h!]
    \centering
    \includegraphics[width=0.45\textwidth]{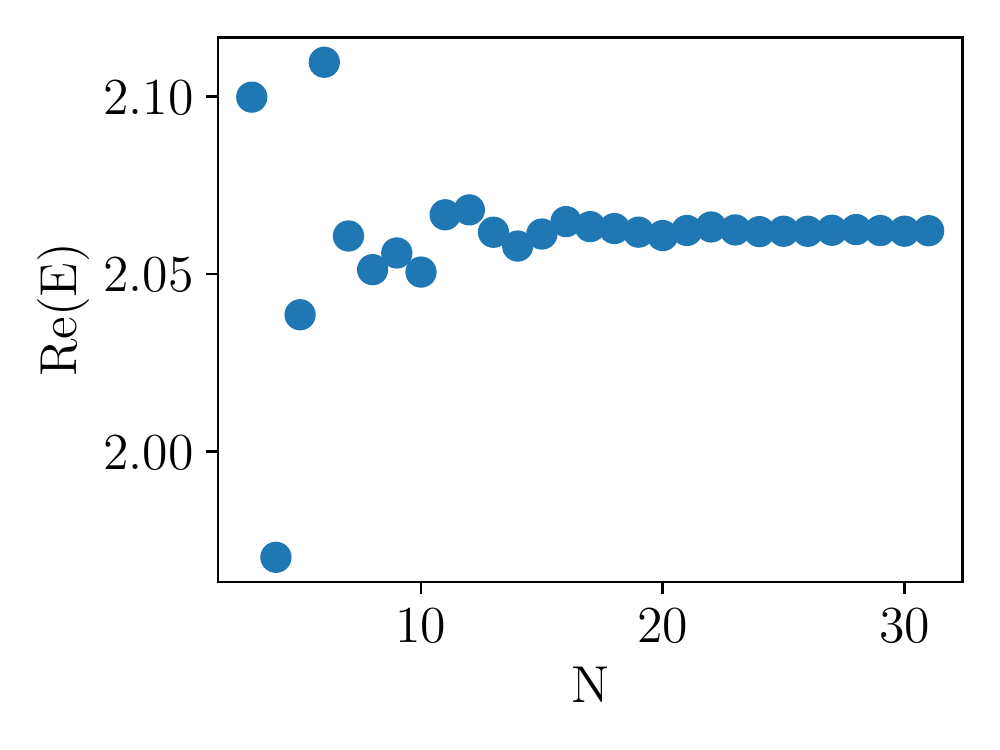} \quad
    \includegraphics[width=0.45\textwidth]{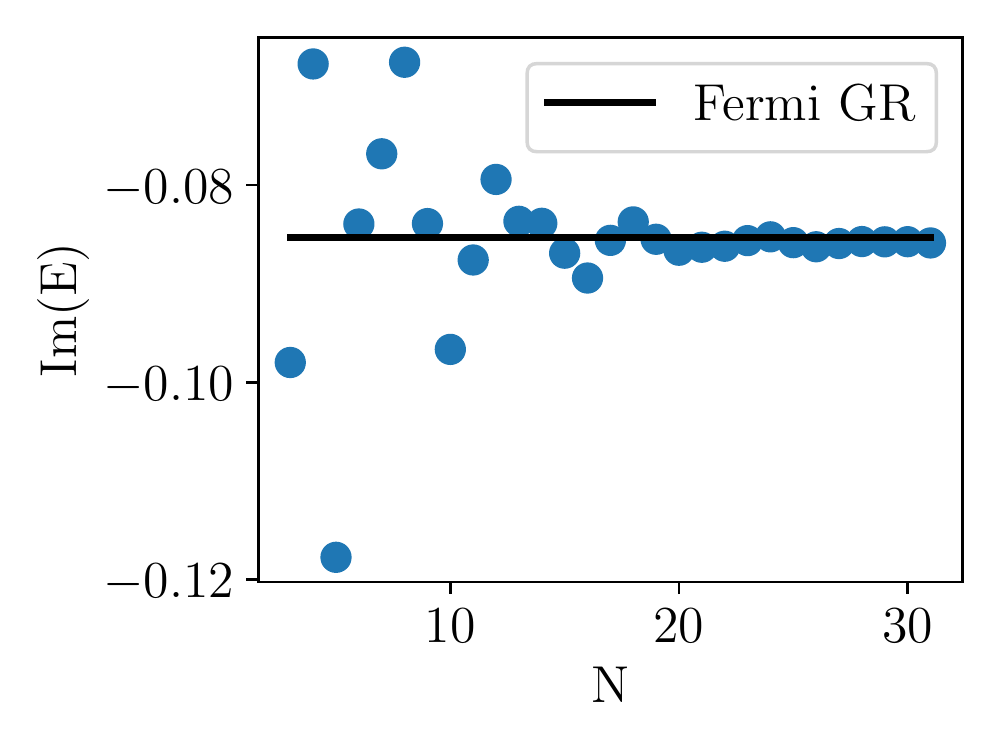}
    \caption{Convergence of the real and imaginary parts of the
      resonance energy with respect to the discretization parameter of
      the Brillouin zone $N$. The prediction of the Fermi Golden rule
      for the imaginary part of the resonance is given in black on the
      right panel.}
    \label{fig:convergence_graphene}
\end{figure}

\begin{figure}[h!]
    \centering
    \includegraphics[width=0.8\textwidth]{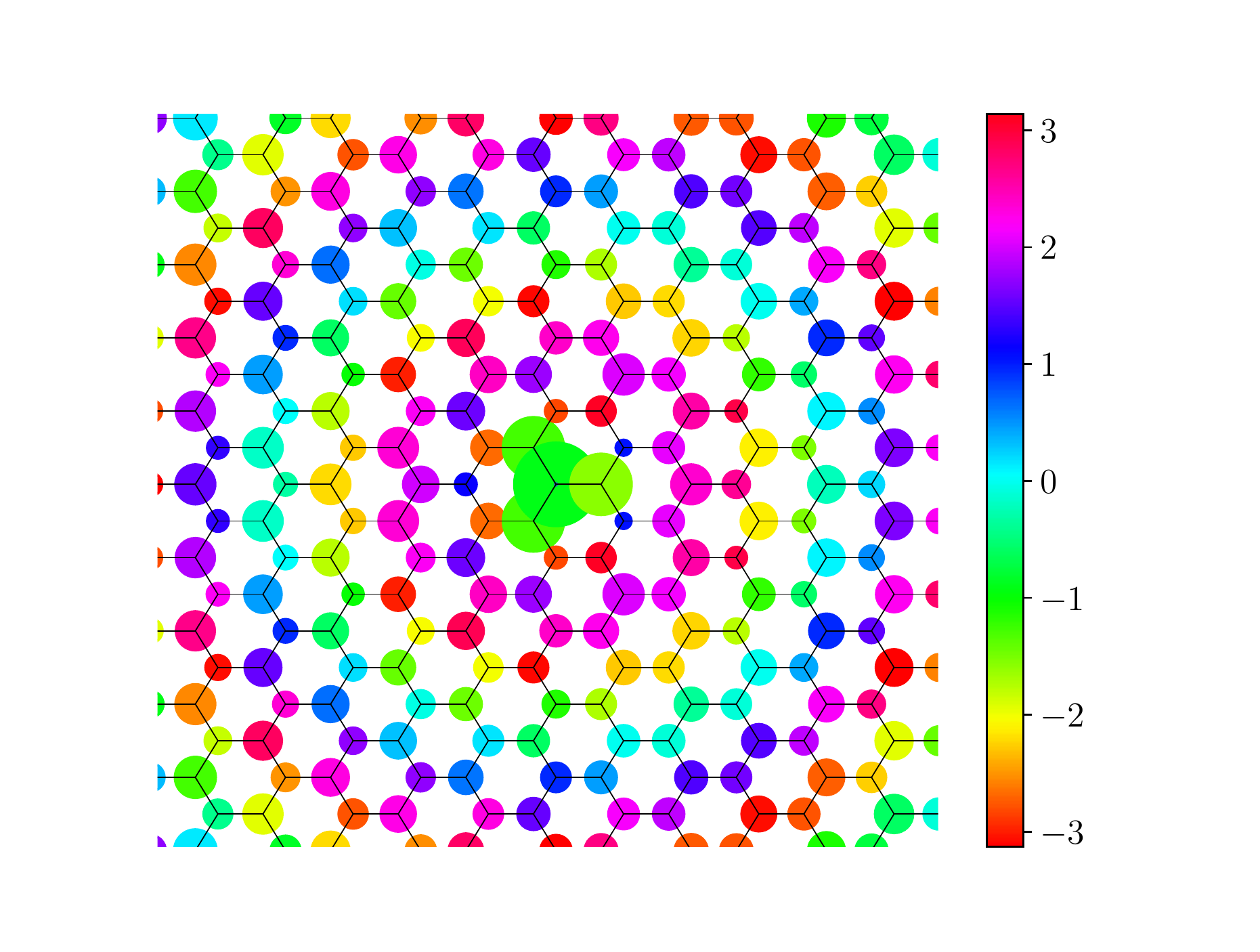}
    \caption{Resonant state. The size of the dots is proportional to the modulus of the state, the color represents the phase.}
    \label{fig:eigenstate_graphene}
\end{figure}
\section{Conclusion}
We have introduced a new method to find resonances in solids in the
framework of a localized, single defect, and demonstrated its
efficiency and generality. The method is based on a reformulation as
an integral equation, posed on the support of the defect. The kernel
of this integral equation is the Green function of the periodic
system, which we compute using a complex deformation of the Brillouin
zone. This algebraic (rather than geometric) splitting is very general
and applies to every locally perturbed periodic problem. Unlike
methods based on introducing artificial dissipation, the BCD-based
method is an exact reformulation of the problem, and only needs to be
converged with respect to the Brillouin zone discretization parameter
$N$ (and, in the case of defects with non-compact support, the domain
truncation parameter). We demonstrated in the Appendix A that this
method can be interpreted as a natural generalization of the
complex-scaling method to non-parabolic dispersion relations.

Although we demonstrated only one- and two-dimensional tight-binding
examples, the method fully applies to three-dimensional models
described by continuous Hamiltonians (for instance, those arising in
Density Functional Theory). Such an extension could be performed
directly using an iterative method to compute the nonlinear eigenvalue
problem \eqref{eq:res_expansion}, together with an iterative method
to express the action of the unit cell resolvent $1/(z-H_{0,\bk})$
appearing in the computation of $R_{0}$. This still requires a large
number of computations if a dense sampling of the Brillouin zone is
necessary. In this case, an alternative, more economical method would
be to use Wannier functions to reduce to a tight-binding model, either
exploiting periodicity \cite{marzari2012maximally}, or using linear
scaling approaches~\cite{BigDFT2020}. We hope to use these tools in
future work to compute resonances in response functions of realistic
solids (for instance, arising from time-dependent functional theory or
GW methods).

\vspace{6pt} 
\section*{Appendix A: connection with complex scaling}
We reformulate our method to make the connection to complex scaling
more explicit. Again for simplicity we consider a tight-binding model
on a lattice $\Gamma$, with state space $\ell^{2}(\Gamma, \C^{M})$, but
our discussion generalizes immediately to continuous models.
Each state $\psi = \{\psi_{n}\}_{n \in \Gamma}$ in this space can be
equivalently seen as a function of the pseudo-momentum $\bk \in \BZ$
through the Bloch (Fourier) transform, which we denote by
$\{\psi(\bk)\}_{\bk \in \BZ}$. We define a (non-unitary) transformation $U$ in (a subset
of) $\ell^{2}(\Gamma, \C^{M})$ by the formula
\begin{align*}
  ( U \psi)(\bk) = \psi(\bk+i \bh(\bk))
\end{align*}
for $\bk \in \BZ$. Letting ${\widetilde H_0} = U H_0 U^{-1}$, we note that
\begin{align*}
  (U H_0 U^{-1} \psi)(\bk) = (H_0 U^{-1} \psi)(\bk+i \bh(\bk)) = H_{0,\bk+i \bh(\bk)} (U^{-1} \psi)(\bk+i \bh(\bk)) = H_{0,\bk+i \bh(\bk)} \psi(\bk).
\end{align*}
This allows us to compute analytic continuations of the Green function
through the formula
\begin{align*}
  \langle  \psi_{1} | \frac 1 {z - \hat H_0} | \psi_{2} \rangle &= \langle  \hat U^{-*}\psi_{1}| \frac 1 {z- \hat U \hat H_0 \hat U^{-1}}  | \hat U \psi_{2} \rangle\\
  &= \frac 1 {|\BZ|} \int_{\BZ} \overline{\psi_{1}}(\bk+ i \bh(\bk)) \left(U^{-1} \frac 1 {z- U H_0 U^{-1}} U \psi_{2}\right)(\bk+i \bh(\bk)) \det(1+\bh'(\bk)) d\bk \\
  &= \frac 1 {|\BZ|} \int_{\BZ} \overline{\psi_{1}}(\bk+ i \bh(\bk)) \frac 1 {z- H_{0,\bk+i \bh(\bk)}}  \psi_{2}(\bk+i \bh(\bk)) \det(1+\bh'(\bk)) d\bk
\end{align*}
from where the results of Section~\ref{sec:periodic_integral_computation} follow.

We can now specialize this discussion to the case of the $d-$dimensional Hamiltonian
$H_{0} = -\tfrac 1 2 \Delta$, which can be seen as a periodic operator
with Brillouin zone $\BZ = \R^{d}$, and a single parabolic band
$\varepsilon_{\bk} = \tfrac 1 2 |\bk|^{2}$. In this case, the localization
function $\chi$ is unnecessary; omitting it from \eqref{eq:ki_k}
results in $\bh(\bk) = -\alpha \bk$, defining the transformation
$(U \psi)(\bk) = \psi(\bk- i \alpha \bk)$. It is instructive to compare this
to the classical complex scaling transformation, defined in real space
by $(U_{\theta} \psi)(e^{i \theta} x)$. Equivalently, this operator
can be defined in Fourier space by
$(U_{\theta} \psi)(\bk) = e^{-i\theta} \psi(e^{-i\theta} \bk)$. Omitting
the phase factor $e^{-i\theta}$ (which does not change
the results) and expanding to first order in $\theta$, this becomes
$(U_{\theta} \psi)(\bk) \approx \psi(\bk - i\theta \bk)$, which is identical
to the above with $\theta=\alpha$. The higher-order deviations are
inconsequential, and come
from the slightly different functional form of the complex
deformation: complex scaling originates from a group action, whereas
we do not find this requirement necessary in our deformation.

The above argument shows that the BCD method can be interpreted as
natural generalization of the complex-scaling method to non-parabolic
dispersion relations. Such a transformation in the reciprocal space is
reminiscent of the generalized complex transformation, also named
smooth exterior complex scaling, employed in molecular systems to
solve some of the problems arising in the uniform complex
scaling~\cite{genovese2015identification}. One important difference
between our implementation of the method and complex scaling, however,
is that we do not transform the whole operator $\hat H = \hat H_{0} + \hat V$, but
rather use the transformation only on $\hat H_{0}$ to compute its Green
function, and deduce the Green function of $\hat H$ by a Dyson equation.
This loses the ``global'' properties (we obtain resonances as a
nonlinear eigenvalue problem, rather than a linear one in the case of
complex scaling), but is much more flexible (since we do not need to
transform $\hat V$, a possibly complicated operation). In simple cases
however, it is possible to compute the transformation $\hat U$ and the
non-Hermitian operator $\hat{\widetilde H} = \hat U \hat H \hat U^{-1}$ explicitly.

\section*{Appendix B: normalization of the resolvent }
Consider an analytic family of matrices $A(z)$, and choose $z_{0}$ such
that $A(z_{0})$ has a simple eigenvalue $0$. There are $v, w$ such that
$A(z_{0}) v = 0, A(z_{0})^{*} w = 0$. We fix $v$ and $w$ to be normalized
to $1$. This means that $A(z_{0}) =
\begin{pmatrix}
  0&0\\0&X
\end{pmatrix}
$
in the representation where the input space is split between $v$ and
$v^{\perp}$, and the output between $w$ and $w^{\perp}$. $X$ is
invertible. Therefore,
\begin{align*}
  A(z) \approx
  \begin{pmatrix}
    A_{11}(z-z_{0}) & A_{12}(z-z_{0})\\
    A_{21}(z-z_{0})&X
  \end{pmatrix}
\end{align*}
with $A_{11} = \langle w, A'(z_{0}) v\rangle$.
The Schur complement is $A_{11}(z-z_{0}) +
O((z-z_{0})^{2})$, with inverse $\frac 1 {z-z_{0}} A_{11}^{-1} + O(1)$, and
it follows that
\begin{align*}
  A(z)^{-1} &\approx \frac 1 {z-z_{0}}
  \begin{pmatrix}
    A_{11}^{-1} & 0\\
    0&0
  \end{pmatrix} + O(1)
  \\
  &= \frac 1 {z-z_{0}}\frac 1 {\langle  w, A'(z_{0}) v \rangle} |v\rangle\langle  w|
\end{align*}

Applying to $A(z) = 1 - V R_{0}(z)$, we have $V R_{0}(z_{0})
\phi = \phi$ so we identify $v = \phi$. For $w$ we solve $V R_{0}(z_{0})^{*}
\widetilde \phi = \widetilde \phi$ and so we identify $w$ to
$R_{0}(z_{0})^{*} \widetilde \phi$, solution of $R_{0}(z_{0})^{*} V w = w$. The result is
\begin{align*}
  R(z) = R_{0}(z) A(z)^{-1} \approx \frac 1 {z-z_{0}} \frac 1 {- \langle  R_{0}(z_{0})^{*} \widetilde \phi, V R_{0}'(z_{0}) \phi \rangle} |R_{0}(z_{0}) \phi\rangle \langle R_{0}(z_{0})^{*} \widetilde \phi|
\end{align*}

With $\psi(z_0) =R_{0}(z_{0})\phi(z_0)=\overline{w}$, the residue of $(1 - V 
R_{0}(z))^{-1}$ at $z_{0}$ can readily be seen to be
\begin{align*}
  (1 - V R_{0}(z))^{-1} \approx \frac1 {-\langle \overline{\psi(z_0)}|  V  R_{0}'(z_{0})| \varphi(z_0) \rangle} |\varphi(z_0)\rangle\langle \overline{\psi(z_0)}|\frac 1 {z-z_{0}}
\end{align*}
which gives the following condition to have \eqref{eq:res_expansion}:
\begin{align}
  \langle \overline{\psi(z_0)}| \hat V \hat R_{0}'(z_{0})| \varphi(z_0) \rangle = -1
\end{align}

Note that $\phi \neq \widetilde \phi$. If $H$ is real, then $R^{*}(z) =
\overline R(z)$ and therefore $\widetilde \phi = \overline{\phi}$ and
the residual is of the form $|\psi\rangle\langle \overline{\psi}|$.

\section*{Acknowledgments}
This project has received funding from the European Research Council
(ERC) under the European Union's Horizon 2020 research and innovation
programme (grant agreement No 810367). Stimulating conversations with
Mi-Song Dupuy and Sonia Fliss are gratefully acknowledged.

\bibliography{bibliographie}
\bibliographystyle{unsrt}
\end{document}